\numberwithin{equation}{section}
\newtheorem{prop}{Proposition}[section]
\newtheorem{theo}[prop]{Theorem}
\newtheorem{lemm}[prop]{Lemma}
\newtheorem{coro}[prop]{Corollary}
\theoremstyle{definition}
\newtheorem{rema}[prop]{Remark}
\newtheorem{exam}[prop]{Example}
\newcommand{\CC}{\mathbb{C}}
\newcommand{\RR}{\mathbb{R}}
\renewcommand{\SS}{\mathbb{S}}
\newcommand{\sH}{\mathscr{H}}
\DeclareMathOperator{\Span}{span}
\DeclareMathOperator{\supp}{supp}
\newcommand{\bangle}[1]{\left\langle #1 \right\rangle}
\define{\sff}{h}
\define{\tfsff}{\accentset{\circ}{\sff}}
\let\oldmarginpar\marginpar
\renewcommand\marginpar[1]{\-\oldmarginpar[\raggedleft\footnotesize #1]%
{\raggedright\footnotesize #1}}
\DeclareMathOperator{\re}{Re}
\DeclareMathOperator{\imag}{Im}
\DeclareMathOperator{\Index}{index}
\DeclareMathOperator{\Res}{Res}
\title{On the topology and index of minimal surfaces II}
\author{Otis Chodosh}
\address{Department of Mathematics, Bldg. 380, Stanford University, Stanford, CA 94305, USA}
\email{ochodosh@stanford.edu}
\author{Davi Maximo}
\address{Department of Mathematics, University of Pennsylvania, Philadelphia,
PA 19104, USA}
\email{dmaxim@math.upenn.edu}
\date{\today}
\begin{document}
	\begin{abstract}
For an immersed minimal surface in $\RR^{3}$, we show that there exists a lower bound on its Morse index that depends on the genus and number of ends, counting multiplicity. This improves, in several ways, an estimate we previously obtained bounding the genus and number of ends by the index. 

Our new estimate resolves several conjectures made by J.\ Choe and D.\ Hoffman concerning the classification of low-index minimal surfaces: we show that there is no complete two-sided immersed minimal surface in $\RR^{3}$ of index two, complete embedded minimal surface with index three, or complete one-sided minimal immersion with index one. 
\end{abstract}

\maketitle

\section{Introduction}

Let $X:\Sigma\rightarrow \mathbb{R}^3$ be a complete minimal immersion. We assume throughout  $\Sigma$ has finite Morse index. By work of Fischer-Colbrie and Gulliver--Lawson \cite{Fischer-Colbrie:1985,Gulliver-Lawson,Gulliver:indexFTC} in the two-sided case, and Ross \cite{Ross} and Ros \cite{Ros:one-sided} in the one-sided case, this is equivalent to $\Sigma$ having finite total curvature. Thus, such a $\Sigma$ is conformally equivalent to a closed Riemann surface $\overline\Sigma$ of finite genus $g$ with finitely many punctures $p_1,p_2,\ldots, p_r$, which correspond to the ends of $\Sigma$. We refer the reader to the introduction of \cite{CM16} for a thorough overview of existing results concerning the index of complete minimal surfaces in $\RR^{3}$. 

In our previous work \cite{CM16}, we established the following lower bound for the index of two-sided surfaces:
\begin{equation}\label{eq:cm16}
\Index(\Sigma) \geq \frac{2}{3}\left(g+r\right)-1.
\end{equation}
We used this to prove the non-existence of embedded minimal surfaces of index two. However, \eqref{eq:cm16} fails to account for the multiplicity of the end and thus is not very useful for the study of immersed minimal surfaces.\footnote{For example, applying \cite[Corollary 15]{MontielRos} to the $k$th-order Enneper's surface $X_{k}:\CC\to\RR$ (the minimal surface with Weierstrass data $g(z) = z^{k},dh=z^{k}dz$ in the notation of \cite{HoffmanKarcher}), we find that $\Index X_{k} = 2k-1\to\infty$ as $k\to\infty$. On the other hand, the right hand side of \eqref{eq:cm16} is $-\frac 13$ for all $k$. Note that Theorem \ref{theo:main} yields the lower bound $\frac{1}{3}(2k+1)$.}

In this work, we substantially improve our estimate \eqref{eq:cm16} both in terms of the numerical constants and also so as to account for the multiplicity of the ends. As a consequence, we obtain several new classification results for complete minimal surfaces of low index. Our main results are as follows.

\begin{theo}\label{theo:main}
Let $X:\Sigma\rightarrow\mathbb{R}^3$ be a two-sided complete immersed minimal surface. Suppose $\Sigma$ has genus $g$ and $r$ ends $E_1,E_2,\ldots, E_r$, with multiplicities respectively $d_1,d_2,\ldots,d_r$. Then 
\[
\Index(\Sigma) \geq \frac{1}{3}\left(2g+2\sum^{r}_{j=1}(d_j+1)-5\right).
\]
\end{theo}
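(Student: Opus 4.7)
Let $d := \deg(\overline{G})$ denote the degree of the Gauss map, which extends to an (anti)holomorphic branched cover $\overline G:\overline\Sigma\to S^2$ by the finite-total-curvature hypothesis. The Jorge--Meeks formula yields
\[
d \;=\; g-1+\sum_{j=1}^{r}(d_j+1),
\]
so the inequality to be proved is equivalent to
\[
\Index(\Sigma)\;\geq\;\tfrac{1}{3}(2d-3).
\]
My plan is to exhibit at least this many linearly independent negative directions for the Jacobi quadratic form $Q(u)=\int_\Sigma(|\nabla u|^2-|A|^2 u^2)\,dA$.

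The first step is to use $G$ to conformally transform $Q$ into a Schr\"odinger-type operator $-\Delta_{\tilde g}-c$ on $\Sigma$ equipped with the pullback metric $\tilde g := G^*g_{S^2}$, for a universal positive constant $c$ dictated by the identity $|A|^2 = -2K$ together with two-dimensional conformal invariance of the Dirichlet energy. The problem then reduces to an index estimate for $-\Delta-c$ on a $d$-fold branched cover of $S^2$.

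The second step is to transfer a well-chosen family of small spherical domains on which $-\Delta_{S^2}-c$ has negative first Dirichlet eigenvalue to a family of preimage pieces in $\Sigma$ via $\overline G$. Since $\overline G$ has degree $d$, each generic domain has $d$ preimage components. The key new ingredient over \cite{CM16} is that the preimage components clustering around each puncture $p_j$ naturally organize into $d_j+1$ sheets, accounting for the $2(d_j+1)$ term in the stated bound; the contribution $2g$ then appears via Riemann--Hurwitz applied to $\overline G$.

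The third step is to assemble the resulting negative Dirichlet eigenfunctions into a linearly independent negative subspace of $Q$. This is nontrivial because the conformal identification of Step 1 introduces a finite-dimensional obstruction from the low spectrum of $-\Delta_{S^2}-c$ on $S^2$ (notably the three-dimensional kernel spanned by coordinate functions, corresponding to translations in $\RR^3$, together with the constant eigenfunction). Quotienting by this obstruction produces the factor $\tfrac{1}{3}$ via an averaging argument over three orthogonal choices of partition, and the $-\tfrac{5}{3}$ correction accounts for the dimensions lost to this obstruction.

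The hardest step will be the second, where one must ensure that each end contributes its full multiplicity $d_j+1$ rather than merely $d_j$, $1$, or some weaker count, while keeping the total defect at only $-\tfrac{5}{3}$. This requires careful local analysis of the Gauss map near each puncture $p_j$ and delicate bookkeeping of the branch contributions in the Riemann--Hurwitz count for $\overline G$.
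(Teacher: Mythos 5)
Your Step 1 already contains a fatal arithmetic error. The Jorge--Meeks formula (equation \eqref{eq:jm} in the paper) gives
\[
d = \deg(\overline G) = -\frac{1}{4\pi}\int_\Sigma \kappa = g-1+\frac12\sum_{j=1}^r (d_j+1),
\]
not $g-1+\sum_{j=1}^r(d_j+1)$ as you assert: you have dropped the factor of $\tfrac12$. (Sanity check on the catenoid: $g=0$, two embedded ends, total curvature $-4\pi$, so $d=1$, whereas your formula gives $d=3$.) Consequently your claimed equivalence
\[
\Index(\Sigma)\geq\tfrac13(2d-3)\quad\Longleftrightarrow\quad \Index(\Sigma)\geq\tfrac13\Bigl(2g+2\sum(d_j+1)-5\Bigr)
\]
is false. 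Rewriting the theorem's right-hand side in terms of $d$ via the correct formula, one gets
\[
\tfrac13\Bigl(2g+2\sum(d_j+1)-5\Bigr) = \tfrac13\Bigl(2d+\sum(d_j+1)-3\Bigr) = \tfrac13(4d-2g-1),
\]
which is strictly stronger than $\tfrac13(2d-3)$ by the amount $\tfrac13\sum(d_j+1)\geq\tfrac43$, and which moreover depends on $g$ and $d$ \emph{separately}, not on $d$ alone.

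This is not a cosmetic problem: it reveals that your entire strategy cannot yield the theorem. Steps 2 and 3 are a Gauss-map pullback argument in the style of Tysk/Grigor'yan--Netrusov--Yau, and any argument that transfers negative directions from the sphere through the branched cover $\overline G$ of degree $d$ can only produce a bound that is a function of $d$ (equivalently, of the total curvature) alone. Theorem \ref{theo:main} is genuinely stronger than any such bound: for a fixed $d$ the theorem's right-hand side $\tfrac13(4d-2g-1)$ grows as $g$ decreases and $\sum(d_j+1)$ increases, a distinction your degree count cannot see. This is precisely the improvement the paper is after --- capturing the contribution of end multiplicities beyond the total curvature --- and is why the paper adopts a different mechanism entirely. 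The paper works with harmonic $1$-forms in a carefully designed logarithmically weighted $L^2$-space (Section \ref{sec:weights}), where the choice of weight is tuned so that meromorphic forms with poles of order up to $d_j+1$ at each puncture become admissible, directly producing a space of dimension $2g+2\sum(d_j+1)-2$. These are fed into the Ros vector-field technique using $X_\omega=(\langle\omega,dx_1\rangle,\langle\omega,dx_2\rangle,\langle\omega,dx_3\rangle)$, and the factor $\tfrac13$ arises from the three Euclidean components rather than from any averaging over spherical caps; a weighted Fischer--Colbrie construction (Proposition \ref{prop:weighted-FC}) supplies the eigenfunctions needed to make the orthogonality argument rigorous. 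If you wish to pursue your Step 2 idea of clustering sheets around punctures, you will still need a mechanism that produces an index contribution scaling with $\sum(d_j+1)$ on top of (not instead of) the $d$ generic preimages; as written, the sketch double-counts and rests on the erroneous formula.
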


This is stronger than our bound \eqref{eq:cm16} from \cite{CM16} even in the case when there are no multiplicities (i.e., the ends are embedded). Indeed, we have: 

\begin{coro}\label{cor:emb}
Let $X:\Sigma\rightarrow\mathbb{R}^3$ be a two-sided complete immersed minimal surface of genus $g$ and with $r$ embedded ends. Then,
\[
\Index(\Sigma) \geq \frac{1}{3}\left(2g+4r-5\right).
\]
\end{coro}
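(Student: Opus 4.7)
The plan is to deduce Corollary \ref{cor:emb} as an immediate specialization of Theorem \ref{theo:main}. The only thing that needs to be observed is that the multiplicity $d_j$ of each end, as defined in Theorem \ref{theo:main}, equals $1$ precisely when the end $E_j$ is embedded. This is essentially a definitional matter: the multiplicity of an end of a complete minimal surface with finite total curvature counts the degree of the Gauss map restricted to a neighborhood of the corresponding puncture, and for ends admitting an embedded representative this degree is $1$.

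Once this is in hand, I would substitute $d_j = 1$ for every $j = 1, \dots, r$ into the bound supplied by Theorem \ref{theo:main}. This yields
\[
\sum_{j=1}^r (d_j + 1) = \sum_{j=1}^r 2 = 2r,
\]
so
\[
\Index(\Sigma) \geq \frac{1}{3}\left(2g + 2\cdot 2r - 5\right) = \frac{1}{3}\left(2g + 4r - 5\right),
\]
which is exactly the stated inequality.

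There is no real obstacle here; the entire content of the corollary lies in Theorem \ref{theo:main}. The only subtlety worth flagging in the writeup is the sentence identifying embedded ends with multiplicity-one ends, so that the reader sees why the substitution $d_j = 1$ is legitimate. To compare with \eqref{eq:cm16}, I would also briefly note that the coefficient of $r$ has improved from $\tfrac{2}{3}$ to $\tfrac{4}{3}$ and the constant term has become less negative, confirming the claim that Corollary \ref{cor:emb} is strictly stronger than \eqref{eq:cm16} whenever $r \geq 1$.
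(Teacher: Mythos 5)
Your proof is correct and is exactly the intended (essentially one-line) specialization of Theorem \ref{theo:main}: set $d_j=1$ for every $j$, so $\sum_{j=1}^r (d_j+1)=2r$, and the bound follows. The paper states the corollary without further argument, so your approach coincides with the paper's.

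One caution about your justification for ``embedded $\Rightarrow d_j=1$.'' The multiplicity $d_j$ as defined in Section \ref{sec:defi} (following Jorge--Meeks) is the multiplicity with which the rescaled link $\Gamma_{R,j}=\{q\in\mathbb{S}^2 : Rq\in E_j\}$ limits onto a great circle; it is \emph{not} the local degree of the Gauss map at the puncture $p_j$. For instance, the single end of Enneper's surface has Gauss map $g(z)=z$ with local degree $1$ at $z=\infty$, yet $d_1=3$. (In the notation of Section \ref{sec:distend}, $d_j$ is the order of the pole of $\phi_1$ minus one, which generally differs from the vanishing order of $g$.) The correct reason $d_j=1$ for an embedded end is that $\Gamma_{R,j}$ is an embedded curve for $R$ large, so its smooth limit is a great circle covered once. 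With this corrected, the argument is exactly right.
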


Our method also extends to the one-sided case, in which case we find
\begin{theo}\label{theo:nonor}
Let $\Sigma$ be a one-sided complete immersed minimal surface in $\mathbb{R}^3$ of finite total curvature. Suppose that the ends of $\Sigma$, $E_1,E_2,\ldots, E_r$, have multiplicities $d_{1},\dots,d_{r}$ and that the genus of the two-sided double cover $\hat \Sigma$ is $g$. Then,
\[
\Index(\Sigma) \geq \frac{1}{3}\left(g+2\sum^{r}_{j=1}(d_j+1)-4\right).
\]
\end{theo}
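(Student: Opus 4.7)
The plan is to pass to the orientable double cover $\pi\colon \hat\Sigma\to\Sigma$ with deck involution $\tau$, and rerun the argument establishing Theorem \ref{theo:main} equivariantly with respect to $\tau$. Since $\tau$ reverses the orientation of the normal bundle, sections of the twisted normal bundle of $\Sigma$ correspond bijectively to $\tau$-anti-invariant functions on $\hat\Sigma$, and the Jacobi operators intertwine the pullback. Hence $\Index(\Sigma)$ equals the \emph{anti-invariant Morse index} of $\hat\Sigma$: the dimension of a maximal subspace of $\tau$-anti-invariant test functions on which the quadratic form $Q(\phi)=\int_{\hat\Sigma}(|\nabla\phi|^2-|A|^2\phi^2)\,dA$ is negative definite. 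The task is therefore to produce $\frac{1}{3}(g+2\sum_j(d_j+1)-4)$ many linearly independent $\tau$-anti-invariant test functions on which $Q$ is negative.

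The first step is to produce end-supported anti-invariant negative modes. The Gauss map $\hat G$ of $\hat\Sigma$ satisfies $\hat G\circ\tau=-\hat G$. Each end $E_j$ of $\Sigma$ of multiplicity $d_j$ lifts to a $\tau$-invariant configuration of ends on $\hat\Sigma$ of the same multiplicity, and as in the proof of Theorem \ref{theo:main} contributes $d_j+1$ coordinate-type negative modes built from the Weierstrass data near the puncture multiplied by a logarithmic cutoff. Choosing the cutoffs $\tau$-invariant — possible since they depend only on conformal distance to the puncture — forces the products of these cutoffs with components of $\hat G$ to be $\tau$-anti-invariant, yielding $2(d_j+1)$ such anti-invariant modes per end and accounting for the $2\sum_j(d_j+1)$ contribution.

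The second step is to extract $\tau$-anti-invariant genus-type test functions. The space of holomorphic differentials on the compactification $\overline{\hat\Sigma}$ is complex $g$-dimensional and carries an involution induced by $\tau^*$. An equivariant Hodge computation, combined with Riemann--Hurwitz applied to the branched quotient $\overline{\hat\Sigma}\to\overline{\hat\Sigma}/\tau$ (which is topologically the compactification of $\Sigma$), shows that the $\tau^*$-anti-invariant subspace of $H^1(\overline{\hat\Sigma};\RR)$ has real dimension exactly $g$ for a one-sided $\Sigma$. Pairing these anti-invariant harmonic $1$-forms with the Gauss map, as in Theorem \ref{theo:main} and \cite{CM16}, produces $g$ further anti-invariant negative modes.

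Finally, the three-term splitting lemma underlying Theorem \ref{theo:main} is applied inside the anti-invariant sector: it supplies the factor $1/3$ and introduces a rigid correction of $-4$, reflecting the dimension of anti-invariant Jacobi fields coming from isometries of $\RR^3$ that descend to $\Sigma$ (one less than in the two-sided setting because one of the Killing directions becomes invariant rather than anti-invariant under $\tau$). Combining the three contributions yields the bound. The main obstacle is the equivariant bookkeeping: checking whether each $E_j$ lifts to one or two connected ends of $\hat\Sigma$ depending on the local topology of the end, ensuring that the end-supported and genus-supported anti-invariant test functions remain linearly independent after projection to the $\tau$-anti-invariant subspace, and verifying the precise dimension of the anti-invariant Hodge piece in the punctured setting.
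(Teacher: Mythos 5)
Your high-level strategy matches the paper's: pass to the orientable double cover $\hat\Sigma$, identify $\Index(\Sigma)$ with the $\tau$-anti-invariant index of $\hat\Sigma$, work with $\tau$-anti-invariant harmonic $1$-forms and $\tau$-invariant cutoffs, and rerun the Theorem~\ref{theo:main} argument. The final bound is correct. But the bookkeeping in the middle contains two errors that cancel, and neither intermediate step as you have written it is sound.

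First, the dimension count. The paper does not build ``$2(d_j+1)$ anti-invariant modes per end plus $g$ from genus.'' It applies Proposition~\ref{prop:dim} to $\hat\Sigma$ (genus $g$, $2r$ ends of multiplicities $d_1,\dots,d_r,d_1,\dots,d_r$) to get $\dim\bigl(\sH^1(\hat\Sigma)\cap L^2_*(\hat\Sigma)\bigr)=2g+4\sum_j(d_j+1)-2$, and then observes that since $\tau$ reverses the orientation of $\hat\Sigma$, the Hodge star interchanges $\sH_+^1$ and $\sH_-^1$, so $\dim\bigl(\sH_-^1(\hat\Sigma)\cap L^2_*(\hat\Sigma)\bigr)=g+2\sum_j(d_j+1)-1$. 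Your tally of $g+2\sum_j(d_j+1)$ overcounts by one; the $-1$ you are missing is the global Riemann--Roch (residue-theorem) constraint, which a purely local, end-by-end construction of test functions cannot detect. Second, your attribution of the $-4$ is wrong: you claim one of the Killing directions ``becomes invariant rather than anti-invariant under $\tau$,'' but in fact all three of $*dx^1,*dx^2,*dx^3$ lie in $\sH_-^1$, since $\tau^*(dx^i)=dx^i$ (the immersion is $\tau$-invariant) while $\tau$ reverses orientation, giving $\tau^*(*dx^i)=-*\tau^*(dx^i)=-*dx^i$. The subtraction for the trivial directions is therefore still $-3$, exactly as in the two-sided case; the correct decomposition is $-4=(-1)+(-3)$, with $-1$ from Riemann--Roch and $-3$ from the three $*dx^i$, as against the two-sided $-5=(-2)+(-3)$ where the Riemann--Roch defect is doubled because one uses all of $\sH^1$ rather than its anti-invariant half. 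Your overcount of $1$ and your spurious extra $-1$ from a ``lost'' $*dx^i$ happen to cancel, producing the right formula from an incorrect accounting.
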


Our proof of Theorems \ref{theo:main} and \ref{theo:nonor} proceed as in \cite{CM16} but with a more delicate choice of weighted space (see Section \ref{sec:weights}). Our new weighted spaces allow us to consider harmonic $1$-forms in the stability inequality with even slower decay towards the ends of the minimal surface than those considered previously. We note that the construction of the index in this space (in the sense of Fischer-Colbrie \cite{Fischer-Colbrie:1985}) in Section \ref{sec:weightedFC} is somewhat subtle and requires a careful choice of cutoff function.

\begin{exam}\label{exam:cat}
For example, on the catenoid $\Sigma = \CC\setminus\{0\}$, the weighted spaces used in \cite{CM16} allowed us to plug in (the real and imaginary components of) $\frac{dz}{z}$. Here, our new weight allows the additional forms $dz$ and $\frac{dz}{z^{2}}$. This yields the \emph{sharp}\footnote{To be precise, the method from \cite{CM16} would yield $\Index(\Sigma) \geq \frac 1 3$ when $\Sigma$ is the catenoid (which has index one). This, of course, implies the ``sharp bound'' $\Index(\Sigma) \geq 1$ after taking the integer part. The point is that our current estimate is \emph{saturated} (even before taking the integer part) in the case of the catenoid, so one should not hope to improve the method even further by finding more forms corresponding to the ends. } lower bound\footnote{We have $6$ linearly independent harmonic $1$-forms to start (the real and complex parts of the given $3$ holomorphic forms). We subtract $3$ to remove the ``linear'' forms $*dx^{1},*dx^{2},*dx^{3}$. Finally, we must divide by $3$ when using the method developed by Ros \cite{Ros:one-sided} to estimate the index (see Section \ref{subsec:outline-method}).} of $\Index(\Sigma)\geq 1$. 
\end{exam}

\subsection{Applications} 

As noted above, our improved bounds yield new classification results for low-index minimal surfaces. In \cite{CM16}, we proved there are no complete embedded minimal surfaces in $\RR^{3}$ of index two. Here, can improve this in two ways: in Theorem \ref{theo:index2} we extend this to rule out immersed index two surfaces and in Theorem \ref{theo:index3} we extend this to rule out embedded index three surfaces. We also show that there are no index one one-sided minimal immersions in Theorem \ref{theo:index-1-nonor}. 

The following was conjectured by Choe \cite[p.\ 210]{Choe:vision}. 
\begin{theo}\label{theo:index2}
There is no complete two-sided minimal immersion into $\RR^{3}$ with index two. 
\end{theo}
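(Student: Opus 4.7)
The plan is to argue by contradiction using the topological restriction imposed by Theorem~\ref{theo:main}. Suppose $X: \Sigma \to \RR^3$ is a two-sided complete minimal immersion with $\Index(\Sigma) = 2$. Substituting into the estimate of Theorem~\ref{theo:main} gives
\[
g + r + \sum_{j=1}^{r} d_j \leq 5,
\]
and since each multiplicity satisfies $d_j \geq 1$ this in turn forces $g + 2r \leq 5$, hence $r \in \{1, 2\}$. The possible data $(g, r, d_1, \ldots, d_r)$ therefore lie in a short explicit list: for $r = 2$ the triples $(g, d_1, d_2) \in \{(0,1,1), (0,1,2), (0,2,1), (1,1,1)\}$, and for $r = 1$ the ten pairs $(g, d_1)$ with $g + d_1 \leq 4$.

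Each of these cases is then eliminated by invoking a classification or non-existence result. The fundamental inputs are: (i) the Fischer-Colbrie--Schoen / do Carmo--Peng theorem, which characterizes the plane as the unique stable complete two-sided minimal surface in $\RR^3$; (ii) the López--Ros classification, identifying the catenoid and Enneper's surface as the only two-sided complete minimal immersions of index one; (iii) the Montiel--Ros computation giving $\Index = 2k - 1$ for the $k$-th order Enneper surface; and (iv) Schoen's theorem that the only two-ended embedded complete minimal surface is the catenoid. These inputs immediately rule out the cases whose topology is realized by one of the plane, catenoid, Enneper, or higher-order Enneper surfaces.

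For the residual cases—certain low-genus configurations, and genus-zero surfaces with a single end of intermediate multiplicity—I would combine the Jorge--Meeks formula (relating $\deg G$ to $g$, $r$ and the $d_j$) with Ros--Montiel-style inequalities bounding the index from below in terms of $\deg G$. The resulting estimates should force either non-existence of such a surface or $\Index \neq 2$. The main obstacle is handling the cases in which no direct uniqueness theorem applies—especially genus-one surfaces with one or two ends—so the argument must proceed by quantitative index estimation rather than by appeal to an off-the-shelf classification. The delicate point is to match the multiplicity conventions used in Theorem~\ref{theo:main} with those appearing in Jorge--Meeks so that the resulting constraints on $\deg G$ can be compared cleanly against Ros--Montiel to rule out $\Index = 2$ in every remaining case.
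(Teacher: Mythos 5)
Your reduction is correct as far as it goes: plugging $\Index=2$ into Theorem~\ref{theo:main} gives $2g+2\sum_{j}(d_j+1)\le 11$, i.e.\ $g+r+\sum_j d_j\le 5$, and the case enumeration that follows is arithmetically right. The trouble is in the endgame, which you acknowledge is ``the main obstacle'' and then leave unresolved: the tools you propose (Jorge--Meeks plus Ros--Montiel-style lower bounds on index in terms of $\deg G$) are provably insufficient to close the genus-one cases. Concretely, a genus-one surface with $\sum_j(d_j+1)\le 4$ has total curvature $\int_\Sigma\kappa\ge -8\pi$ by Jorge--Meeks, and the sharpest general lower bound of this type (indeed the one proved in this paper) gives only $\Index\ge \tfrac13+\tfrac{1}{6\pi}\cdot 8\pi=\tfrac53$, i.e.\ $\Index\ge 2$. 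That is exactly consistent with $\Index=2$ and rules nothing out. So ``quantitative index estimation'' cannot substitute for a classification theorem here.

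What the paper actually does to finish is different in two specific places you don't have access to from your tool list. First, it invokes the Choe vision-number result \cite[Theorem 7]{Choe:vision} (or \cite[Corollary 3.3]{Nayatani}) to assume $g\ge 1$ outright, which collapses most of your case list and in particular removes all the genus-zero one-ended cases you'd otherwise have to handle by hand. Second, and crucially, it uses L\'opez's classification \cite{Lopez:12pi} of complete minimal surfaces of genus one with $\int_\Sigma\kappa\ge -8\pi$: the only such surface is the Chen--Gackstatter surface. This is the ingredient that converts the soft inequality $\int_\Sigma\kappa\ge -8\pi$ into a single explicit surface, and then the Montiel--Ros computation \cite[Corollary 15]{MontielRos} gives $\Index(\text{Chen--Gackstatter})=3$, killing the last case. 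Your bullet (iii) cites Montiel--Ros only for higher-order Enneper surfaces (which are genus zero and are already removed by the $g\ge 1$ reduction), not for Chen--Gackstatter, and L\'opez's total-curvature classification theorem doesn't appear in your list at all (only L\'opez--Ros on index one). You also leave the $g=2$, $r=1$, $d_1=2$ case unaddressed; the paper eliminates it via the monotonicity formula. Without these three specific inputs, the argument as written has a genuine gap at exactly the point you flagged as delicate.
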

\begin{proof}
Consider $X:\Sigma\to\RR^{3}$ of index two. By \cite[Theorem 7]{Choe:vision} or \cite[Corollary 3.3]{Nayatani}, we can assume that $\Sigma$ has genus $g\geq 1$. Applying Theorem \ref{theo:main}, we find that 
\[
2g+2\sum^{r}_{j=1}(d_j+1) \leq 11. 
\]
We prove in Lemma \ref{lemm:tot-wind-bds} below that any non-flat immersed minimal surface of finite total curvature has
\[
\sum^{r}_{j=1}(d_j+1)\geq 4.
\]
Thus, $g = 1$ and $\sum^{r}_{j=1}(d_j+1) = 4$.  
Combined with the Jorge--Meeks formula \eqref{eq:jm}, we find
\[
\int_{\Sigma} \kappa =  -8\pi,
\]
where $\kappa$ is the Gauss curvature of $\Sigma$. By work of L\'opez \cite{Lopez:12pi}, because $g=1$, the only possibility is that $X:\Sigma\to\RR^{3}$ is the Chen--Gackstatter surface (cf.\ \cite[\S 2.2]{HoffmanKarcher}). However, the Chen--Gackstatter surface has index $3$ by work of Montiel--Ros \cite[Corollary 15]{MontielRos}. This concludes the proof. 
\end{proof}

\begin{rema}
We note that a similar argument can be used to give a new proof of the classification of complete immersed minimal surfaces of index one by L\'opez--Ros \cite{LopezRos:index-one} (they must be either Enneper's surface or the catenoid). 
\end{rema}

The following was posed to us by Hoffman (see \cite[p.\ 402]{CM16}). 

\begin{theo}\label{theo:index3}
There is no complete embedded minimal surface in $\RR^{3}$ with index three. 
\end{theo}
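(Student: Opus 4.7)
The plan is to use Corollary \ref{cor:emb} to narrow the topology of any hypothetical index-three embedded example down to a short list of cases, and then eliminate each case via standard classification theorems and index computations for low-topology embedded minimal surfaces.

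Suppose $X:\Sigma\to\RR^{3}$ is a complete embedded minimal surface with $\Index(\Sigma)=3$. Corollary \ref{cor:emb} yields
\[
2g+4r-5\leq 9, \qquad\text{i.e.,}\qquad g+2r\leq 7.
\]
Since the flat plane has index zero, $\Sigma$ is not a plane; in particular $r\geq 2$, because any complete embedded minimal surface in $\RR^{3}$ of finite total curvature with a single end must be a plane (the single embedded end has vanishing flux, hence is planar, and then the whole surface is planar). By Schoen's uniqueness theorem, any two-ended complete embedded minimal surface of finite total curvature is the catenoid, which has index one, so $r\neq 2$. Combined with $g+2r\leq 7$, this forces $r=3$ and $g\in\{0,1\}$.

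For $(g,r)=(0,3)$, the L\'opez--Ros classification of genus-zero complete embedded minimal surfaces of finite total curvature gives only the plane and the catenoid, contradicting $r=3$. The remaining case is $(g,r)=(1,3)$, where the Jorge--Meeks formula forces the total curvature to equal $-12\pi$. By the uniqueness of the Costa surface in this topological class (Costa and Hoffman--Meeks), $\Sigma$ must be the Costa surface. However, Nayatani \cite{Nayatani} computed the index of the Costa surface to be $5$, yielding the desired contradiction with $\Index(\Sigma)=3$.

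The principal obstacle is the case $(g,r)=(1,3)$: one needs both a nontrivial uniqueness statement for the Costa surface among genus-one complete embedded minimal surfaces with three ends and Nayatani's explicit index computation; the remaining cases reduce to routine pigeonholing using the corollary together with well-known low-genus classifications.
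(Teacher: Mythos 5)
Your reduction to the case $g=1$, $r=3$ is correct and follows essentially the same route as the paper (the paper invokes $r\geq 3$ and $g\geq 1$ somewhat tersely, while you spell out the elimination of $r\leq 2$ via Schoen and of $g=0$ via L\'opez--Ros; both are legitimate). However, there is a genuine gap in your handling of the final case.

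You assert that $(g,r)=(1,3)$ together with embeddedness forces $\Sigma$ to be \emph{the} Costa surface, and then invoke Nayatani's computation $\Index=5$. This is false. Costa's characterization \cite{Costa:JDG,Costa:Invent} shows that a complete embedded minimal torus of total curvature $-12\pi$ must lie in the one-parameter \emph{deformation family} $\{\Sigma_t\}_{t\geq 1}$ constructed by Hoffman--Meeks, of which the Costa surface is only the $t=1$ member. Nayatani's theorem computes the index only for $\Sigma_1$; as noted in the paper's footnote, the behavior of the index along the family for $t>1$ is not known from existing results, and the best a priori bound available (Choe's vision number argument) gives only $\Index(\Sigma_t)\geq 3$, which does not suffice. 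Closing this gap is precisely the content of Section \ref{sec:costa}: one has to show, by decomposing harmonic $1$-forms and candidate eigenfunctions according to the reflection symmetries $\tau_1,\tau_2$ and combining the resulting dimension counts with a vision-number argument, that $\Index(\Sigma_t)\geq 4$ for all $t\geq 1$. Without an argument of this kind, your proof does not rule out the possibility that some $\Sigma_t$ with $t>1$ has index $3$, so the proposal as written is incomplete.
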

\begin{proof}
Consider an embedded minimal surface $\Sigma\subset \RR^{3}$ with $\Index(\Sigma) = 3$. Using Theorem \ref{theo:main}, we find
\[
 g+2r \leq 7.
\]
Since the ends are embedded, the only examples with $r\leq 2$ are the plane and catenoid by \cite{Schoen:symmetry}, with index $0$ and $1$ respectively. Thus $r \geq 3$. Similarly, by work of L\'opez--Ros \cite{LopezRos:genus-zero} the plane and the catenoid are the only embedded genus zero examples, so we can assume that $g\geq 1$. 

Combining the above inequality with $r\geq 3$ and $g\geq 1$, we find that $g =1$ and $r=3$. Thus by Costa's characterization of embedded minimal tori with $3$-ends \cite{Costa:JDG,Costa:Invent}, $\Sigma$ must be a member of the deformation family of the Costa surface, as constructed by Hoffman--Meeks (see \cite{Costa:Invent} and \cite[\S 4]{HoffmanKarcher}). 

We show in Section \ref{sec:costa} that the three ended tori in the in deformation family have index at least $4$.\footnote{Nayatani has shown that the index of the Costa surface itself is exactly $5$ \cite{Nayatani:indexGauss} (see also \cite{Nayatani:CHM-index,Morabito}). However, it seems to be unknown how the index behaves along the deformation family \cite{Nayatani:comm,Morabito:comm}. Choe's vision number argument \cite[Corollary 5]{Choe:vision} applies in this case to prove that it is at least $3$, but this does not suffice for us here (alternatively the lower bound of $3$ for the index along the deformation family follows from Theorem \ref{theo:main}). } This is a contradiction. 
\end{proof}

We also confirm the following conjecture of Choe \cite[p.\ 210]{Choe:vision}:

\begin{theo}\label{theo:index-1-nonor}
There is no complete one-sided minimal immersion in $\RR^{3}$ with index one.
\end{theo}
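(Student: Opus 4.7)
The plan is to argue by contradiction, running the same outline as the proof of Theorem \ref{theo:index2}. Suppose $X:\Sigma\to\RR^{3}$ is a one-sided complete minimal immersion with $\Index(\Sigma)=1$. Applying Theorem \ref{theo:nonor} gives
\[
g+2\sum_{j=1}^{r}(d_{j}+1)\leq 7,
\]
where $g$ is the genus of the two-sided double cover $\hat\Sigma$. Since each summand satisfies $d_{j}+1\geq 2$ and $r\geq 1$, this forces $r=1$ and $g+2d_{1}\leq 5$. The case $d_{1}\geq 2$ is ruled out by the monotonicity formula exactly as in the proof of Theorem \ref{theo:index2} (the density-at-infinity argument carrying over verbatim to the double cover). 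Hence the sole end of $\Sigma$ is embedded ($d_{1}=1$), and $g\in\{0,1,2,3\}$.

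Next, I would eliminate $g=0$. In this case $\hat\Sigma$ is conformally a twice-punctured sphere with two embedded ends, and the only complete two-sided minimal immersion with this conformal type is (up to rigid motion and scaling) the catenoid. However, every isometry of the catenoid induced from a rigid motion of $\RR^{3}$ either fixes the waist circle or fixes the axis, so the catenoid admits no free isometric involution. Thus it cannot be a two-to-one cover of a one-sided minimal surface, a contradiction.

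For $g\in\{1,2,3\}$, I would use the Jorge--Meeks formula on $\hat\Sigma$ (with two embedded ends, each of multiplicity $1$) together with the relation $\int_{\Sigma}K=\frac{1}{2}\int_{\hat\Sigma}K$ to compute $\int_{\Sigma}K=-2\pi(g+1)\in\{-4\pi,-6\pi,-8\pi\}$. I would then invoke the classification of complete non-orientable immersed minimal surfaces of small total curvature (Meeks and L\'opez--Mart\'in): the lower bound on total curvature for non-orientable examples eliminates the smallest genera, while the enumeration at total curvature $-6\pi$ returns only Meeks's Möbius strip (which has $d_{1}=2$, already excluded) and Henneberg's surface (which is not a genuine immersion, having branch points). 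The remaining possibility $g=3$, $\int_{\Sigma}K=-8\pi$ corresponds to a short list of surfaces in L\'opez's classification, each of whose index has been computed in the literature and is at least two.

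The main obstacle will be the last step: checking that every surface on the resulting finite list of candidates is known to have index at least two. This is a bookkeeping step relying on existing index computations rather than a new analytic argument, but it requires care to ensure no genuine one-sided complete minimal immersion escapes the classification tables invoked here.
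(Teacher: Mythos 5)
The paper's proof is a two-liner: after invoking Theorem~\ref{theo:nonor} to get $g+2\sum_{j=1}^{r}(d_j+1)\leq 7$, it cites the fact that $\sum_{j=1}^{r}(d_j+1)\geq 4$ for any non-planar complete minimal immersion of finite total curvature (\cite[Proposition 3.1]{HoffmanKarcher}), which gives $g\leq -1$, an immediate contradiction. You obtain the first inequality correctly but then never invoke this bound on $\sum(d_j+1)$; instead you attempt a multi-page case analysis. Two problems with that.

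First, your step ``the case $d_1\geq 2$ is ruled out by the monotonicity formula exactly as in the proof of Theorem~\ref{theo:index2} \ldots carrying over verbatim'' does not go through. In Theorem~\ref{theo:index2} the excluded case was the specific triple $(g,r,d_1)=(2,1,2)$ for a two-sided surface, and what makes it contradictory is that the Jorge--Meeks formula \eqref{eq:jm} then forces $\deg N=g-1+\tfrac12(r+\sum d_j)=\tfrac52$, which is not an integer. For a one-sided $\Sigma$ the relevant formula is \eqref{eq:jmnon}, applied on the double cover $\hat\Sigma$, and no such parity obstruction appears: e.g.\ $(g,r,d_1)=(1,1,2)$ gives $\deg N = 1-1+\tfrac12(2+4)=3$, a perfectly good integer, yet $g+2(d_1+1)=7$ passes your inequality. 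So ``$d_1\geq 2$'' is not eliminated by the argument you cite, and your list of surviving cases is incomplete.

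Second, even after that is repaired, the remaining programme (enumerate the conformal types with $g\leq 3$ and one embedded end, recognize each as appearing in the L\'opez--Mart\'in/Meeks tables, and confirm each has index at least two) is a genuinely different and far heavier route than the paper's; it trades one cited inequality for a classification-plus-index bookkeeping argument with several external dependencies that you yourself flag as the ``main obstacle.'' You should instead simply apply $\sum_{j=1}^r(d_j+1)\geq 4$ to the inequality you already have from Theorem~\ref{theo:nonor}; no case analysis is needed.
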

\begin{proof}
Consider $X:\Sigma\to\RR^{3}$ a complete one-sided minimal immersion with index one. Theorem \ref{theo:nonor} implies that
\[
g+2\sum^{r}_{j=1}(d_j+1) \leq 7.
\]
Because $\sum_{j=1}^{r}(d_{j}+1)\geq 4$ by Lemma \ref{lemm:tot-wind-bds}, this is a contradiction.
%
\end{proof}

\begin{rema}
Ambrozio--Buzano--Carlotto--Sharp have observed \cite{ABCS:closed-compactness} that our previous work \cite{CM16} (along with L\'opez--Ros \cite{LopezRos:index-one} and our work\footnote{See also the subsequent work of Buzano--Sharp \cite{buzanoSharp}.} with Ketover \cite{CKM} on the degeneration of bounded index minimal surfaces) leads to compactness results for minimal surfaces with low index but comparably large genus in (non-bumpy) Riemannian $3$-manifolds with positive scalar curvature. Our Theorems \ref{theo:main} and \ref{theo:index3} can be used to sharpen some of their results. 

We also note that the same authors have observed that our work \cite{CM16} is also relevant to the study of boundary degeneration of index one free boundary minimal surfaces \cite{ABCS:fb}. The analysis of the index of free boundary surfaces in a half-space has some features in common with our arguments in Section \ref{sec:costa} concerning the index of the deformation family.  
\end{rema}

Finally, we explain how our main results imply a qualitative version of the fact that finite index is equivalent to finite total curvature. Theorems \ref{theo:main} and \ref{theo:nonor}, combined with the Jorge--Meeks formula yield the following result (the upper bound is due\footnote{The same bound with a worse constant was originally proven by Tysk \cite{Tysk}. See also \cite{Nayatani:indexGauss,GriYau03}.} to Ejiri--Micallef \cite{EjiriMicallef}).

\begin{theo}
If $X:\Sigma\to\RR^{3}$ is a non-planar two-sided minimal immersion, then
\[
\frac 1 3 + \frac{1}{6\pi} \int_{\Sigma} (- \kappa) \leq \Index(\Sigma) \leq  -3 + \frac{3}{2\pi} \int_{\Sigma} (-\kappa).
\]
For $X:\Sigma\to\RR^{3}$ a non-planar one-sided minimal immersion, we have
\[
\frac 1 3 + \frac{1}{6\pi} \int_{\Sigma} (-\kappa) \leq \Index(\Sigma) \leq  -6 + \frac{3}{\pi} \int_{\Sigma} (-\kappa).
\]
\end{theo}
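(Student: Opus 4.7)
The plan is to combine Theorems \ref{theo:main} and \ref{theo:nonor} with the Jorge--Meeks formula to re-express the lower bounds in terms of total curvature. The upper bounds are exactly the estimates of Ejiri--Micallef \cite{EjiriMicallef} noted in the excerpt, so I focus on the lower bounds and cite the upper bounds without further proof.

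Concretely, Jorge--Meeks gives
\[
\tfrac{1}{2\pi}\int_{\Sigma}(-\kappa) = 2g - 2 + \sum_{j=1}^{r}(d_{j}+1)
\]
in the two-sided case, and (since $\chi(\overline{\Sigma})=1-g$ when $g$ is the genus of the orientable double cover $\hat{\Sigma}$)
\[
\tfrac{1}{2\pi}\int_{\Sigma}(-\kappa) = g - 1 + \sum_{j=1}^{r}(d_{j}+1)
\]
in the one-sided case. Subtracting $\bigl[\tfrac{1}{3}+\tfrac{1}{6\pi}\int_{\Sigma}(-\kappa)\bigr]$ from the right-hand sides of Theorems \ref{theo:main} and \ref{theo:nonor} respectively, a short algebraic manipulation using the two displayed identities above shows that in both cases the difference equals
\[
\tfrac{1}{3}\Bigl(\sum_{j=1}^{r}(d_{j}+1)-4\Bigr).
\]
Thus both desired lower bounds follow at once from the claim that $\sum_{j=1}^{r}(d_{j}+1)\geq 4$ whenever $X:\Sigma\to\RR^{3}$ is non-planar.

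For this last inequality I would proceed as in the proofs of Theorems \ref{theo:index2} and \ref{theo:index-1-nonor}. When $r\geq 2$, each end satisfies $d_{j}\geq 1$, so $\sum(d_{j}+1)\geq 2r\geq 4$ automatically. When $r=1$, integrality and positivity of the Gauss map degree $\deg G = g - 1 + \tfrac{1}{2}\sum(d_{j}+1)$ for a non-planar two-sided surface forces $\sum(d_{j}+1)$ to be even and at least two, while the borderline possibilities $d_{1}\in\{1,2\}$ are excluded exactly as in the proof of Theorem \ref{theo:index2} via the classical fact that a single simply-covered end forces $X(\Sigma)$ to be a plane and via the parity/monotonicity obstruction of \cite[Proposition 3.1]{HoffmanKarcher}. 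The one-sided analogue is precisely the input already used in the proof of Theorem \ref{theo:index-1-nonor}. I expect the only genuinely delicate step to be this $r=1$ corner-case verification; everything else is routine bookkeeping with Jorge--Meeks.
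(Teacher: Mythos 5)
Your lower-bound argument is correct and follows the same route as the paper: apply Theorems \ref{theo:main} and \ref{theo:nonor}, re-express via Jorge--Meeks, and invoke $\sum_{j=1}^{r}(d_{j}+1)\geq 4$ for a non-planar surface. You rearrange the algebra differently (subtracting and showing the difference is $\tfrac{1}{3}(\sum(d_j+1)-4)\geq 0$, rather than grouping the bound into a total-curvature piece plus a residual as the paper does), but it is the same computation, and the paper simply cites \cite[Proposition 3.1]{HoffmanKarcher} for $\sum(d_j+1)\geq 4$ rather than unpacking the $r=1$ cases as you do.

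The gap is in the upper bound. Your claim that \emph{``the upper bounds are exactly the estimates of Ejiri--Micallef''} is false. What \cite[Theorem 3.2]{EjiriMicallef} gives in the two-sided case is
\[
\Index(\Sigma)\leq -\frac{1}{\pi}\int_\Sigma\kappa + 2g-3,
\]
which has a genus term that must still be eliminated. The paper does this by another application of Jorge--Meeks, rewriting the right-hand side as $-\tfrac{3}{2\pi}\int_\Sigma\kappa-\sum_{j=1}^{r}(d_j+1)-1$, and then discarding the nonnegative quantity $\sum(d_j+1)-2$ using $\sum(d_j+1)\geq 2$. So the stated upper bound is strictly weaker than Ejiri--Micallef's and genuinely requires the Jorge--Meeks substitution; it cannot simply be cited. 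In the one-sided case there is an additional step you omit: one applies Ejiri--Micallef to the two-sided double cover and uses that $\Index(\Sigma)\leq\Index(\widehat\Sigma)$ and that the total curvature of $\widehat\Sigma$ is twice that of $\Sigma$, before running the same Jorge--Meeks elimination. Your outline for the lower bounds is sound, but the upper-bound half of the theorem is not ``without further proof''; you need to carry out these two reductions explicitly.
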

That such a bound should hold was originally conjectured by Fischer-Colbrie \cite{Fischer-Colbrie:1985} and more explicitly by Grigor'yan--Netrusov--Yau \cite{GriYau03,GNY:eig}. It can be seen as a quantitative version of the fact that finite total curvature is equivalent to finite index \cite{Fischer-Colbrie:1985,Gulliver-Lawson,Gulliver:indexFTC}. We emphasize that by combining \cite[Theorem 4]{Nayatani:indexGauss} with \cite[Theorem 17]{Ros:one-sided} one can give a different proof of the lower bound in the case of two-sided surfaces, albeit with a much worse constant. 
\begin{proof}
If $X:\Sigma\to\RR^{3}$ is a two-sided minimal immersion, then Theorem \ref{theo:main} yields
\begin{align*}
\Index(\Sigma) & \geq \frac{1}{3}\left(2g+2\sum^{r}_{j=1}(d_j+1)-5\right)\\
& = \frac{2}{3}\left(g-1+\frac 12 \sum^{r}_{j=1}(d_j+1)\right) + \frac 1 3 \sum^{r}_{j=1}(d_j+1) - 1\\
& = - \frac{1}{6\pi} \int_{\Sigma}\kappa + \frac 1 3 \sum^{r}_{j=1}(d_j+1) - 1.
\end{align*}
The final equality follows from the Jorge--Meeks formula \eqref{eq:jm}. Since $\sum_{j=1}^{r}(d_{j}+1)\geq 4$ for a non-flat minimal surface (cf.\ \cite[Proposition 3.1]{HoffmanKarcher}), we find 
\[
\Index(\Sigma) \geq \frac{1}{3}-\frac{1}{6\pi}\int_{\Sigma}\kappa,
\]
as claimed. 

On the other hand, \cite[Theorem 3.2]{EjiriMicallef} yields
\begin{align*}
\Index(\Sigma) & \leq -\frac1\pi \int_{\Sigma}\kappa + 2 g - 3 \\
& =  - \frac{3}{2\pi}\int_{\Sigma} \kappa - \sum_{j=1}^{r}(d_{j}+1) - 1\\
& \leq - \frac{3}{2\pi}\int_{\Sigma} \kappa - 3
\end{align*}
In the second step, we have used the Jorge--Meeks formula \eqref{eq:jm}. 

The argument in the one-sided case uses a similar argument based on Theorem \ref{theo:nonor} for the lower bound. For the upper bound, we combine \cite[Theorem 3.2]{EjiriMicallef}  with the fact that the index of the two-sided double cover is at least the index of the one-sided surface (and the total curvature is doubled). 
\end{proof}

\subsection{Outline of the method} \label{subsec:outline-method} We briefly indicate the strategy used to prove Theorem \ref{theo:main}. The heart of our approach goes back to Ros \cite{Ros:one-sided} who discovered an ingenious method to relate the topology of a minimal immersion into $\RR^3$ to the index via harmonic $1$-forms. Namely, Ros observed that if $\omega$ is a harmonic $1$-form on a two-sided minimal surface $\Sigma$ in $\RR^3$ then formally:
\[
\sum_{i=1}^3 Q(\bangle{\omega,dx_i},\bangle{\omega,dx_i}) = 0
\]
where $Q$ is the second variation operator (see Section \ref{subsec:two-sided-immersions}). This essentially follows by comparing the Bochner formula for harmonic $1$-forms on a surface to the form of the second variation operator, but this is not completely precise, as we may only plug in compactly supported functions to the second variation form. We will ignore this issue for now in this sketch, but it turns out to be a completely central issue, and we will return to it below. 

As such, if $\bangle{\omega,dx_i}$ is orthogonal to the index (negative eigenspace of $Q$) for $i=1,2,3$, then such an equality implies that $\bangle{\omega,dx_i}$ must be nullity for $Q$, i.e.\ it must be a Jacobi field. However, Ros proved that this is only possible for specific $\omega$, namely, only for forms in the span of $*dx_1,*dx_2,*dx_3$ (which correspond to the translational Jacobi fields). As such, we see that if there are many more harmonic $1$-forms than the index, then we can find $\omega$ that is not in the span of $*dx_1,*dx_2,*dx_3$ and so that $\bangle{\omega,dx_i}$ is orthogonal to the index for $i=1,2,3$. However, this is a contradiction. Thus, we see that there is a bound on the number of allowable harmonic $1$-forms in terms of the index. Considering the linear algebra in the statement we have just given, we arrive at the bound
\[
\frac 13 ( h_A - 3 ) \leq \Index(\Sigma),
\]
where $h_A$ is the dimension of the allowable harmonic $1$-forms. Indeed, we work in a co-dimension $3$ subspace of the allowable harmonic $1$-forms so as to avoid the span of $  *dx_1,*dx_2,   \allowbreak*dx_3$. Then, the orthogonality requirement boils down to $3
\times \Index(\Sigma)$ equations (since we have to arrange orthogonality for $i=1,2,3$). We know that these equations cannot have a non-zero solution by the above discussion, so $3\Index(\Sigma) \geq h_A - 3$. 

As such, the key question is how to relate $h_A$ to the underlying geometry/topology. Essentially, this boils down to the question of ``which harmonic $1$-forms can actually be plugged into the second variation via a cutoff argument?''

In \cite{Ros:one-sided}, it is shown that the $L^2(\Sigma)$-harmonic forms can be plugged into the second variation. This leads to $h_A = 2g$ (for $g$ the genus of $\Sigma$), essentially since the $L^2$ norm of a $1$-form is conformally invariant and a $L^2$-harmonic form extends across a point singularity. Our previous paper \cite{CM16} built on this by observing that it is possible to plug in more $1$-forms by considering a weighted $L^2$-space. This paper pushes this idea further by finding a weighted space that is ``optimal'' in some sense (cf.\ Example \ref{exam:cat}, Remark \ref{rema:opt}). 

We now explain the issues involved with the weighted $L^2$-spaces. A basic observation concerning the Morse index is that if one uses a weighted $L^2$-space (denoted in this paper $L^2_*$), since the index is the limit of the index of compact sets, the index is insensitive to the weight ($L^2_*$ and $L^2$ are equivalent norms for functions supported in a fixed compact set). However, the advantage of considering a weighted space is that one can prove a weighted version of Fischer--Colbrie's result (see \cite{Fischer-Colbrie:1985}) concerning an $L^2$- (here $L^2_*$) basis of the index. This makes it possible for the integration by parts arguments necessary to plug in harmonic $1$-forms that lie in $L^2_*$. 

The tension in this method is that one would like to choose a weight (to define $L^2_*$) so that the maximal number of harmonic $1$-forms are in $L^2_*$. On the other hand, the weight must be sufficiently controlled to be able to prove a weighted Fischer--Colbrie result (see Proposition \ref{prop:weighted-FC}) as well as carry out the cut-off function argument to plug $\bangle{\omega,dx_i}$ into the second-variation (see Section \ref{sec:proof-main}). The key idea here is to design $L^2_*$ so as to be compatible with the logarithmic cutoff function (cf. Remark \ref{rema:opt}).

Finally, we remark that to prove the non-existence of embedded index $3$ surfaces, we have to modify this method to study the Morse index of the Hoffman--Meeks deformation family of the Costa surface (see Section \ref{sec:costa}). Because the deformation family has two reflection symmetries, we study the problem as it decomposes into even/odd functions under such symmetries. This allows us to slightly improve our estimates (since we can compute the parity of the allowable harmonic $1$-forms as well as estimate the parity of the index). This slight improvement is enough for us to prove that the deformation family has index at least $4$. 

\subsection{Acknowledgements} We are grateful to David Hoffman, Filippo Morabito, and Shin Nayatani for valuable discussions related to the index of the deformation family. We are grateful to Ivaldo Nunes for pointing out a mistake in the proof of Proposition \ref{prop:weighted-FC} in the first version of this paper as well as to Shuli Chen for a careful reading of a draft of the paper. We are also grateful to the referee for several helpful comments. O.C.\ has been supported in part by the Oswald Veblen Fund, the NSF grants DMS-1811059 and DMS-1638352, as well as by Terman and Sloan Fellowships. The majority of this work was completed during time he was a post-doc at Princeton University and the Institute for Advanced Study. D.M.\ was supported in part by an NSF grant DMS-1737006, DMS-1910496, and a Sloan Fellowship.

\subsection{Organization} 

In Section \ref{sec:defi} we give the relevant definitions and background, as well as recalling several useful computations. In Section \ref{sec:weights} we describe our new weighted space and study the harmonic forms lying in this space. In Section \ref{sec:weightedFC}, we construct weighted eigenfunctions realizing the index. In Section \ref{sec:proof-main} we prove the main theorem, Theorem \ref{theo:main} and in Section \ref{sec:nonor} we prove the non-orientable version, Theorem \ref{theo:nonor}. Finally, in Section \ref{sec:costa} we study the index of the deformation family containing the Costa surface.

\section{Finite Morse index immersions}\label{sec:defi}

Throughout this note, we shall always assume that $X:\Sigma\rightarrow\mathbb{R}^3$ is a complete immersed minimal surface in $\RR^{3}$ of finite index.  Apart from Sections \ref{sec:non} and \ref{sec:nonor}, we shall always assume that $\Sigma$ is two-sided. We will denote by $|\cdot|$ the Euclidean distance and by $B_{R} = \{x\in\RR^{3} : |x|< R\}$ the extrinsic ball of radius $R$. Moreover, $C$ will denote a positive constant, allowed to change from line to line. 

\subsection{Two-sided immersions}\label{subsec:two-sided-immersions}For $X:\Sigma\rightarrow\mathbb{R}^3$ two-sided minimal immersion in $\RR^{3}$, we consider the stability operator defined by $L:=-\Delta+2\kappa$, where $\Delta$ is the intrinsic Laplacian associated to $X^{*}(g_{\RR^{3}})$, and the associated quadratic form
\begin{equation*}
Q(\phi,\phi) : = \int_{\Sigma}|\nabla \phi|^{2}+2\kappa\phi^{2}.
\end{equation*}
Here, $\kappa$ is the Gauss curvature of $\Sigma$.  We note that $\Sigma$ is of finite index if and only if it has finite total curvature \cite{Fischer-Colbrie:1985} and, hence, it is conformally equivalent to a compact Riemann surface $\overline \Sigma$, punctured at finitely many points $p_{1},\dots,p_{r}$ \cite{Osserman:FTC}. Moreover, the Gauss map extends across the punctures as a meromorphic map and, in particular, such a $\Sigma$ is properly immersed and we may also compute the index by taking the limit of extrinsic balls:
\begin{equation*}
\Index (\Sigma) = \lim_{R\to\infty} \Index(\Sigma \cap B_{R}).
\end{equation*}
where $\Index(\Sigma \cap B_{R})$ is the number of negative eigenvalues of $L$ in $\Sigma\cap B_R(0)$ with Dirichlet boundary condition. We refer to \cite{CM16} and the references therein for details. 

For a each $j=1,2,\ldots, p_r$, we consider $D_j$ a small punctured disk neighborhood of $p_j$ in $\overline \Sigma$. We refer to $E_j=X(D_j)$ as an end of $\Sigma$. Following \cite{JorgeMeeks}, for a fixed $R>0$, we consider $\Gamma_{R,j}=\{q \in \mathbb{S}^2|\,Rq\in E_j\}$, where $\mathbb{S}^2$ is the unit sphere in $\mathbb{R}^3$, and observe that $\Gamma_{R,j}$ converges smoothly as $R\rightarrow\infty$ to a great circle of $\mathbb S^2$ with finite multiplicity, which we refer to as the multiplicity of $E_j$ and denote by $d_j$. Moreover, Jorge-Meeks \cite{JorgeMeeks} showed that:
\begin{equation}\label{eq:jm}
\deg(N) = -\frac{1}{4\pi} \int_\Sigma \kappa = g-1 + \frac{1}{2}\left(r+\sum_{j=1}^{r} d_j\right)
\end{equation}
where $N : \overline \Sigma \to \SS^2$ is the (extended) Gauss map. 

\subsection{One-sided immersions}\label{sec:non} For an one-sided immersions $X:\Sigma\rightarrow\mathbb{R}^3$, the Morse index and total curvature can be defined using  the two-sheeted orientable covering:
$$\pi:\widehat\Sigma \rightarrow \Sigma.$$ 

For instance, if $\tau:\widehat{\Sigma} \rightarrow \widehat \Sigma$ denotes the change of sheets involution and $N$ is the unit normal vector of $\widehat{\Sigma}$, then $\tau\circ N = -N$ and an infinitesimal  compactly supported deformation of $\Sigma$ correspond to a function $\phi$ with compact support in $\widehat{\Sigma}$ satisfying $\phi \circ \tau = -\phi$. Thus, if $Q$ is the quadratic form associated with the second variation of area for $\widehat \Sigma$, the index of $\Sigma$ may be defined as the number of negative eigenvalues of $Q$ with respect to deformations that are anti-invariant with respect to $\tau$. 

The total curvature of $\Sigma$, on the other hand, is equal to the degree of its ``Gauss map" $n:\Sigma\rightarrow \mathbb{RP}^2 $, which in turn can be defined via the following diagram:
\[
\xymatrix{
\widehat\Sigma\ar[r]^N \ar[d]^\pi & \mathbb{S}^2 \ar[d]^p  \\
\Sigma \ar[r]^n & \mathbb{RP}^2\\
}
\]
where $p:\mathbb{S}^2\rightarrow\mathbb{RP}^2$ is the standard covering of $\mathbb {RP}^2$. It follows from Ros \cite[Theorem 17]{Ros:one-sided} that $\Sigma$ has finite Morse index if and only if it has finite total curvature. The total curvature of $\Sigma$ can then be estimated by applying the Jorge-Meeks formula to $\widehat\Sigma$. Indeed, if $\widehat \Sigma$ has genus $g$ and $s$ ends, then $s$ must be even, say $s=2r$ and the ends of $\widehat{\Sigma}$ must be of the form $E_1,E_2,\ldots, E_r,\allowbreak \tau(E_1), \tau(E_2),\ldots, \tau(E_r)$, and so:
\begin{align*}
\deg (n) &= \deg(N) \\
               & = g-1 + \frac{1}{2}\left(s+\sum_{j=1}^{s} d_j\right).
\end{align*}
Finally, since $E_j$ and $\tau(E_j)$ must have the same multiplicity, the above reduces to:
\begin{equation}\label{eq:jmnon}
-\frac{1}{2\pi}\int_{\Sigma}\kappa = \deg (n) = g-1 + \left(r+\sum_{j=1}^{r} d_j\right).
\end{equation}

\subsection{The total winding number}

We include the proof of the following well-known but useful estimate. We are grateful to the referee for pointing out a simplification in the two-sided case. 
\begin{lemm}\label{lemm:tot-wind-bds}
Suppose that $X :\Sigma \to\RR^3$ is a complete minimal immersion with finite total curvature. If $X(\Sigma)$ is not a flat plane, then
\[
\sum_{j=1}^r (d_j + 1) \geq 4
\]
where $d_1,\dots,d_r$ are the multiplicities of the ends of $X$. 
\end{lemm}
\begin{proof}
Suppose that $X$ is a non-flat two-sided immersion. Note that $d_j\geq 1$ for $j=1,\dots,r$ by definition. As such, if $r\geq 2$ the claim follows. Thus, we can assume that $r=1$. The Jorge--Meeks formula \eqref{eq:jm} yields
\[
g-1 + \frac{1}{2}  (d_1 + 1) = \deg(N) \geq 1
\]
Parity consideration implies that $d_1$ is odd. If $d_1 \geq 3$, the lemma follows, so we thus find $d_1=1$. In particular $X$ has precisely one end, and that end is embedded. An embedded end of finite total curvature is either asymptotic to a plane or a catenoid. The half-space theorem of Hoffman--Meeks \cite{HoffmanMeeks:halfspace} implies that $X(\Sigma)$ is flat (cf.\ \cite[Remark 2.2]{HoffmanKarcher}), a contradiction.

When $X$ is a one-sided immersion, we can assume that $r=1$ as above. In this case, \eqref{eq:jmnon} yields
\[
g-1 + d_1 + 1 = \deg(n) \geq 1.
\]
As such, we must consider $d_1\in\{1,2\}$. The case that $d_1=1$ violates the half-space theorem as before. For $d_1=2$, we can appeal to the monotonicity formula to conclude that $X$ is an embedding since $X(\Sigma)$ has density $2$ at $\infty$ (see \cite[Proposition 3.1]{HoffmanKarcher}). This contradicts the one-sidedness of $X$ since a proper embedding into $\RR^3$ is two-sided. 
\end{proof}
\subsection{Distance function at an end $E_j$}\label{sec:distend} We finish this part of the paper with Sections \ref{sec:distend} through \ref{sec:test}. They carry important calculations that will be used later, but could be skipped at first reading. We assume two-sidedness again until Section \ref{sec:nonor} (where we will apply these computations to the two-sided double cover). 

The following calculation will be important for proving integrability. Write $X: D_j\backslash\{0\}\rightarrow \RR^{3}$ for a conformal parametrization of one of its end $E_j$ with multiplicity $d_{j}$. Assume that the Gauss map satisfies $g(0) = 0$. If $z$ is the local coordinate in $D_j\backslash\{0\}$, we follow the proof of \cite[Lemma 10]{CM16} (cf.\ \cite[Proposition 2.1]{HoffmanKarcher}) and write the Weierstrass representation: 
\begin{align*}
X(z)=\re\displaystyle \int (\phi_1,\phi_2,\phi_3) 
\end{align*}
\noindent where 
\begin{align*}
\phi_{1}(z) & = \left( \frac{A}{z^{d_j+1}} + \frac{B_{1}(z)}{z^{d_j}} \right) dz\\
\phi_{2}(z) & = \left( \frac{iA}{z^{d_j+1}} + \frac{B_{2}(z)}{z^{d_j}} \right) dz\\
\phi_{3}(z) & = \left( \frac{B_{3}(z)}{z^{d_j}} \right) dz,
\end{align*}
for $B_{1}(z),B_{2}(z),B_{3}(z)$ holomorphic on $D_{j}$ and $A \in \CC\setminus\{0\}$. Integrating this, (one must slightly modify the estimate when $d_j=1$ but the conclusions below can be seen to still hold) we see that 
\[
X(z) = \re \left(-\frac{A}{d_{j}} z^{-d_{j}} + O(z^{-d_{j}+1}),-\frac{iA}{d_{j}} z^{-d_{j}} + O(z^{-d_{j}+1}), O(z^{-d_{j}+1}) \right),
\]
as $z\to 0$, so
\begin{equation}\label{eq:dist-end-spinning}
|X(z)| \simeq |z|^{-d_{j}}.
\end{equation}

Continuing on, because $g(z) = O(z)$, the unit normal can be written as
\begin{align*}
N=\left(\frac{2\re(g)}{|g|^2+1}, \frac{2\imag(g)}{|g|^2+1},\frac{|g|^2-1}{|g|^2+1}\right) = (0,0,-1) + O(z).
\end{align*}
This allows us to estimate $N\cdot \nabla_{\mathbb{R}^3} |x|$ as $z\rightarrow 0$,
\[
N\cdot \nabla_{\mathbb{R}^3} |x| = N \cdot \frac{X(z)}{|X(z)|} = O(z).
\]
Since $|X(z)|\simeq |z|^{-d_j}$, we find that
\begin{equation}\label{eqend}
|N\cdot \nabla_{\mathbb{R}^3} |x| | \leq C |z| \leq C |X(z)|^{-1/d_j}.
\end{equation}

\subsection{The Gaussian curvature at an end $E_{j}$} Assuming as in the previous section that $X:D_{j}\setminus\{0\}\to\RR^{3}$ is conformally parametrizing the end $E_{j}$. Write $d_{j}$ for the multiplicity of $E_{j}$ and assume that $g(0) = 0$. Write
\[
B_{3}(z)= b z^{n} + O(z^{n+1})
\]
as $z\to 0$, for some $n\geq 0$ and $b\not = 0$.

Recall that the Weierstrass representation gives (see \cite[Theorem 2.1]{HoffmanKarcher})
\[
(\phi_{1}(z),\phi_{2}(z),\phi_{3}(z)) = \left(\frac 12 (g^{-1}-g)dh, \frac i2(g^{-1}+g)dh,dh \right),
\]
for $\phi_{i}$ considered above. Note that 
\[
g=\dfrac{\phi_3}{\phi_1-i\phi_2}=\dfrac{B_3(z)z}{2A+z(B_1-iB_2)} = \frac{b}{2A} z^{n+1}  + O(z^{n+2})
\]
and
\[
dh = b z^{-d_{j}+n} + O(z^{-d_{j}+n+1})
\]
as $z\to 0$. 

Now, recall that the Gaussian curvature can be written in terms of $g$ and $dh$ (see \cite[(2.10)]{HoffmanKarcher}) as
\[
\kappa = \frac{-16}{(|g|+|g|^{-1})^{4}} \left| \frac{dg/g}{dh} \right|^{2}.
\]
We compute
\[
\left|\frac{dg/g}{dh}\right|^{2} = \left(\frac{n+1}{b}\right)^{2} |z|^{2(d_{j}-n-1)} (1+O(z))
\]
Thus, there is $c>0$ depending on $E_{j}$ so that
\[
\kappa(X(z)) = - c |z|^{2(d_{j}+n+1)}(1+O(z))
\]
as $z\to 0$. It is useful to write this using $|X(z)|\simeq |z|^{-d_{j}}$ as 
\begin{equation}\label{eq:gauss-curv-end}
\kappa(x) = O( |x|^{-2- \frac{2(n+1)}{d_{j}}}). 
\end{equation}
for $x \in E_{j}$ with $|x|\to\infty$. All that will matter in the sequel is the super-quadratic decay.

\subsection{A smoothed out log-cutoff function}\label{sec:test}
Let $\xi:\mathbb{R}\rightarrow\mathbb{R}$ be a smooth function satisfying $\xi(s) = 0$ for $s\leq 0$ and $\xi(s) = 1$ for $s\geq 1$. In particular, there exists a constant $C>0$ such that $|\xi'|,|\xi''|\leq C$. For any $R>1$, we define $\varphi_R:\Sigma\rightarrow \mathbb R$ to be:
\[
\varphi_R(x) = \xi \circ \psi_R(x),
\]
where $\psi_R(x)=2 -\frac{\log |x|}{\log R}$ is the standard log-cutoff function.

Note that $\varphi_R$ satisfies $\varphi_{R}(x) = 1$ for $|x|\leq R$ and  $\varphi_{R}(x) = 0$ for $|x|\geq R^{2}$. The goal of the remainder of this section is to estimate $|\nabla \varphi_R|$ and $|\Delta  \varphi_R|$ where $\nabla,\Delta$ are the intrinsic gradient and Laplacian associated to $X^{*}(g_{\RR^{3}})$. These are clearly supported in the compact region $X^{-1}(B_{R^{2}}(0)\setminus B_{R}(0))$. First, we note:
\begin{align}\label{est:gradient}
|\nabla \varphi_R| &= |\xi'|  |\nabla \psi_R| \nonumber \\
& \leq C \left| \frac{1}{|x|\log R} \nabla |x|\right| \nonumber \\
& \leq  \frac{C}{|x|\log R}.
\end{align}
Next, since $\Sigma$ is minimal, we recall that $|x|\Delta |x| = 2 - |\nabla |x||^2$. Hence: 
\begin{align*}
\Delta \log |x|& = \frac{1}{|x|} \Delta |x| -\frac{1}{|x|^2} |\nabla |x||^2 \\
&= \frac{2}{|x|^2} (1-|\nabla |x||^2)
\end{align*}
Since the gradient of $|x|$ in $\Sigma$ is just the tangent projection of the gradient of $|x|$ in $\mathbb{R}^3$, we have that $1-|\nabla |x||^2=(N \cdot \nabla_{\RR^{3}} |x|)^2$. Using this: 
\[
\Delta \psi_R(x) =
-2\frac{(N\cdot \nabla_{\RR^{3}} |x|)^2}{|x|^2\log R},
\]
and thus, by \eqref{eqend}, if $k=\min\{d_1,d_2,\ldots,d_r\}$, we get the estimate:
\begin{equation*}
|\Delta \varphi_R (x)| \leq \frac{C}{ |x|^{2+2/k}\log R}.
\end{equation*}
Therefore, we finally obtain (taking $C>0$ larger if necessary):
\begin{align}\label{est:laplacian}
|\Delta \varphi_R | \leq |\xi''||\nabla \varphi_R|^2 + |\xi'| |\Delta \varphi_R| 
\leq \frac{C}{|x|^2\log^2R} + \frac{C}{ |x|^{2+2/k}\log R}.
\end{align}

\section{A new weighted space and more harmonic 1-forms}\label{sec:weights}

We define the weighted space $L^{2}_{\ast}(\Sigma)$ to be the completion of smooth compactly supported functions with respect to the norm
\begin{equation*}
\Vert f \Vert_{L^{2}_{\ast}(\Sigma)}^{2} : = \int_{\Sigma} f^{2} (1+|x|^2)^{-1}(\log(2+|x|))^{-2},
\end{equation*}
where $|x|$ is the Euclidean distance. This norm clearly comes from an inner product, making $L^{2}_{\ast}(\Sigma)$ into a Hilbert space.

\subsection{Harmonic 1-forms}
We will show next that the extra logarithmic weight allows for more harmonic 1-forms. Write $\sH^{1}(\Sigma)$ for the space of all harmonic $1$-forms on $\Sigma$. 
\begin{prop}
If  $X: D\backslash\{0\}\rightarrow \Sigma$ be a conformal parametrization of an end $E_j$ of $\Sigma$. If $z$ is the local coordinate in $D\backslash\{0\}$, the 1-forms $ \frac {dz} z, \frac {dz} {z^2}, \ldots, \frac{dz}{z^{d_j+1}}$ are all integrable in $L^2_\ast(E_{j})$, where $d_{j}$ is the multiplicity of $E_{j}$. 
\end{prop}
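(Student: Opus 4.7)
The plan is to compute the weighted $L^{2}_{\ast}$-norm of each form $\omega_k := dz/z^{k}$ directly in the local coordinate $z$ and show it is finite precisely when $k \leq d_{j}+1$. The proof breaks into three short steps.

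First I would compute the pointwise norm of $\omega_k$ with respect to the induced conformal metric. The Weierstrass representation gives the conformal factor $\lambda$ via $ds^{2} = \lambda^{2}|dz|^{2}$ with $\lambda = \tfrac{1}{2}(|g|+|g|^{-1})|dh/dz|$. Using the expansions recalled in the paper, $g(z) = O(z^{n+1})$ and $dh \sim b\, z^{-d_{j}+n}dz$ near the puncture, so $\lambda \simeq |z|^{-d_{j}-1}$. For a holomorphic 1-form $f\, dz$ one has $|f\, dz|^{2}_{g} = 2|f|^{2}\lambda^{-2}$, so combined with $dA = \lambda^{2}\, dx\, dy$ we get the key clean identity
\[
|\omega_k|^{2}\, dA \simeq \frac{dx\, dy}{|z|^{2k}},
\]
which is independent of $\lambda$ (as it should be for holomorphic forms).

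Next I would pull back the weight. By \eqref{eq:dist-end-spinning}, $|X(z)| \simeq |z|^{-d_{j}}$, so near the puncture
\[
(1+|x|^{2})^{-1}(\log(2+|x|))^{-2} \simeq |z|^{2d_{j}} (-\log|z|)^{-2}.
\]
Plugging in,
\[
\|\omega_k\|_{L^{2}_{\ast}(E_j)}^{2} \simeq \int_{0 < |z| < \varepsilon} \frac{|z|^{2(d_{j}-k)}}{(\log|z|)^{2}}\, dx\, dy.
\]

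Finally I would pass to polar coordinates $z = re^{i\theta}$ and check integrability. For $k \leq d_{j}$ the integrand is bounded, so integrability is immediate. The critical case is $k = d_{j}+1$, where the integral reduces to $\int_{0}^{\varepsilon} \frac{dr}{r(\log r)^{2}}$; the substitution $u = \log r$ converts this to $\int^{\log \varepsilon}_{-\infty} du/u^{2}$, which is finite. This is precisely the place where the extra logarithmic weight is needed: with only the $(1+|x|^{2})^{-1}$ weight used in \cite{CM16}, this borderline form would just barely fail to be integrable. I expect this borderline calculation to be the only point requiring care; the other $k$ are straightforward, and the case $k \geq d_{j}+2$ (used implicitly to explain why the list stops) diverges by the same polar-coordinates computation.
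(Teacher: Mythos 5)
Your argument is correct and is essentially the same as the paper's: both proofs reduce to the observation that $|\omega|^{2}\,dA$ is conformally invariant (so the metric drops out), substitute $|X(z)| \simeq |z|^{-d_{j}}$ from \eqref{eq:dist-end-spinning} into the weight, and conclude by observing that $\int_{0}^{\varepsilon} r^{2(d_{j}-k)+1}(\log r)^{-2}\,dr$ converges for $k \leq d_{j}+1$ via the antiderivative $\bigl(1/\log r\bigr)' = -r^{-1}(\log r)^{-2}$. Your explicit computation of the conformal factor $\lambda$ is a slight detour the paper avoids by invoking conformal invariance directly, but the substance is identical.
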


\begin{proof}
Let $\omega=\frac{dz}{z^l}$, where $1\leq l\leq d_j+1$. By \eqref{eq:dist-end-spinning}, we have that $|X(z)| \simeq |z|^{-d_{j}}$. Additionally, because the squared norm of a 1-form times the area element is a pointwise conformally invariant quantity, we can estimate:
\begin{align*}
\left\Vert\frac{dz}{z^l}\right\Vert^2_{L^2_\ast(E_j)} &\leq \int_{D_j\backslash\{0\}} \frac{1}{|z|^{2l}} (1+|X(z)|^2)^{-1}(\log(2+|X(z)|))^{-2} |dz|^2 \\
& \leq C \int_{D_j\backslash\{0\}} \frac{1}{|z|^{2l}}\frac{|z|^{2d_j}}{1+|z|^{2d_j}}\frac{1}{\log^2(2+|z|^{-d_j})} |dz|^2\\
& \leq C \int_{0}^{1} r^{2(d_{j}-l) +1} (\log r)^{-2} \, dr
\end{align*}
For any $l\in\{1,\dots,d_j+1\}$, the above integral is finite.\footnote{Note that $\left( \frac{1}{\log r} \right)' = - r^{-1}(\log r)^{-2}$. (This is precisely the standard log-cutoff trick.)}
\end{proof}

\begin{rema}\label{rema:opt}  In the case of a conformal parametrization $X: D\backslash\{0\}\rightarrow \Sigma$ of an embedded end $E$ of $\Sigma$, the form $\frac{dz}{z^2}$ is actually equal to $-dw$, where $w=\frac{1}{z}$ has bounded geometric norm $|dw|$ ($|\cdot|$ being the norm taken with respect to the Riemannian metric on $\Sigma$). Thus, $\left| \frac{dz}{z^2}\right|$, is a bounded function on $\Sigma$ and thus can be seen to belong in $L^2_\ast$ after adding the logarithmic weight. On the other hand, $\left| \frac{dz}{z^3}\right| = \left| wdw\right|$ is unbounded on $\Sigma$.

\end{rema}

\begin{prop}\label{prop:dim}
If $\Sigma$ has genus $g$ and $r$ ends, with multiplicities $d_1,d_2,\ldots, d_r$, the dimension of $\mathscr{H}^1(\Sigma) \cap L^2_\ast(\Sigma)$, i.e., harmonic forms in $L_{*}^{2}(\Sigma)$, is $2g+ 2\sum_{j=1}^{r}(d_j+1) -2$.
\end{prop}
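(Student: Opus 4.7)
\emph{Plan.} The strategy is to identify $\sH^{1}(\Sigma)\cap L^{2}_{\ast}(\Sigma)$ with a space of meromorphic differentials on the compactification $\overline\Sigma$ and then apply Riemann--Roch. Since harmonicity of $1$-forms is conformally invariant in dimension two, the map $\omega\mapsto \omega + i\ast\omega$ defines a real-linear isomorphism from real harmonic $1$-forms on $(\Sigma,X^{\ast}g_{\RR^{3}})$ to holomorphic $1$-forms on $\Sigma$ equipped with its conformal structure as $\overline\Sigma\setminus\{p_{1},\ldots,p_{r}\}$. Because $|\ast\omega|=|\omega|$ pointwise, this map preserves $L^{2}_{\ast}$-integrability, reducing the problem to computing the complex dimension of the space of $L^{2}_{\ast}$ holomorphic differentials on $\Sigma$.

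I would then classify the admissible local behavior of such a differential $\omega = f(z)\,dz$ near each puncture $p_{j}$. The preceding proposition already gives sufficiency: $dz/z^{l}$ lies in $L^{2}_{\ast}$ for every $1\leq l\leq d_{j}+1$. For necessity, expand $f(z)=\sum_{k}c_{k}z^{k}$ as a Laurent series on a small punctured disk and compute the squared $L^{2}_{\ast}$ norm in polar coordinates using $|X(z)|\simeq|z|^{-d_{j}}$. Angular integration eliminates all cross terms and reduces the squared norm to $\sum_{k}|c_{k}|^{2}I_{k}$, where
\[
I_{k} = \int_{0}^{\varepsilon} r^{2(d_{j}+k)+1}(\log r)^{-2}\,dr.
\]
A direct analysis (using the antiderivative $-1/\log r$ in the borderline exponent case) shows $I_{k}<\infty$ if and only if $k\geq -(d_{j}+1)$; hence $f$ cannot have an essential singularity and must extend to a meromorphic differential on $\overline\Sigma$ with pole of order at most $d_{j}+1$ at $p_{j}$.

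Setting $D=\sum_{j=1}^{r}(d_{j}+1)p_{j}$, the discussion above identifies the space of $L^{2}_{\ast}$ holomorphic differentials with $H^{0}(\overline\Sigma, K_{\overline\Sigma}\otimes \cO(D))$. Since $\deg D \geq 1$, Serre duality gives $h^{1}(K(D)) = h^{0}(\cO(-D)) = 0$, and Riemann--Roch then yields
\[
h^{0}(K(D)) = \deg K(D) - g + 1 = g - 1 + \sum_{j=1}^{r}(d_{j}+1).
\]
Doubling this complex dimension to obtain the real dimension gives $2g + 2\sum_{j=1}^{r}(d_{j}+1) - 2$, as claimed.

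The main subtlety is the sharp threshold in the second step, and in particular the inclusion of the borderline pole order $l = d_{j}+1$. This depends crucially on the $(\log(2+|x|))^{-2}$ factor in the definition of $L^{2}_{\ast}$, which is precisely what makes $\int_{0} r^{-1}(\log r)^{-2}\,dr$ convergent near the origin and which produces the multiplicity-dependent improvement over the weighted space used in \cite{CM16}.
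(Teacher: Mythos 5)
Your proof is correct and follows essentially the same route as the paper: identify $\sH^{1}(\Sigma)\cap L^{2}_{\ast}(\Sigma)$ with meromorphic differentials on $\overline\Sigma$ having poles of order at most $d_{j}+1$ at the punctures, then apply Riemann--Roch. The paper compresses the converse direction (that $L^{2}_{\ast}$-integrability forces the pole-order bound) into ``it is easy to check''; your Laurent-series computation with the sharp threshold $I_{k}<\infty \iff k\geq -(d_{j}+1)$ is exactly the verification being elided there.
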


\begin{proof}
	By Osserman \cite{Osserman:FTC}, $\Sigma$ is conformal to a Riemann surface with genus $g$ and $r$ punctures, say $p_1,p_2\ldots,p_r$,  corresponding respectively to the ends $E_1,E_2,\ldots E_r$. By Riemann-Roch,\footnote{Specifically, this follows by considering the divisor $\mathfrak{U} = \prod_{j=1}^{r} p_{j}^{-d_{j}-1}$ in e.g., \cite[\S III.4]{FK}. } the dimension of holomorphic 1-forms on $\overline{\Sigma}$ which have poles of order at most $d_j+1$ at the point $p_j$, for $j=1,\ldots,r$ is exactly
\[
g+ \sum_{j=1}^{r}(d_j+1) -1.
\]
By taking real and imaginary parts, we find at least $2g+ 2\sum_{j=1}^{r}(d_j+1) -2$ harmonic forms in $L^{2}_{*}(\Sigma)$. 

Conversely, for any harmonic form $\omega\in L^{2}_{*}(\Sigma)$, the form $\widetilde{\omega} : = \omega + i * \omega$ is holomorphic on $\Sigma$ (cf.\ \cite[I.3.11]{FK}). Using \eqref{eq:dist-end-spinning} and the definition of $L^{2}_{*}(\Sigma)$, it is easy to check that $\widetilde{\omega}$ must extend meromorphically to $\overline\Sigma$, with a pole of order $d_{j}+1$ at each point $p_{j}$. This proves the assertion. 
\end{proof}

\section{Weighted eigenfunctions}\label{sec:weightedFC}

\begin{prop}\label{prop:gradientl2}
	Suppose $f\in L^2_\ast(\Sigma)$ is a smooth function on $\Sigma$ satisfying the weighted eigenvalue equation: 
	$$\Delta f -2\kappa f + \lambda (1+|x|^2)^{-1}(\log(2+|x|))^{-2} f=0.$$
	Then $|\nabla f| \in L^2(\Sigma)$.
\end{prop}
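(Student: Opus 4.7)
\medskip

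\noindent\textbf{Proof plan.} This is a Caccioppoli-type estimate. The plan is to test the weighted eigenvalue equation against $\varphi^{2}f$ where $\varphi=\varphi^{2}_{R}$ is the cut-off from Section \ref{sec:test} (with $\alpha=2$), integrate by parts, and verify that the right-hand side is bounded uniformly in $R$.

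Abbreviate $w(x)=(1+|x|^{2})^{-1}(\log(2+|x|))^{-2}$, so the hypothesis reads $\Delta f=2\kappa f-\lambda w f$ and $\int_{\Sigma}f^{2}w<\infty$. First I would compute
\[
\int_{\Sigma}\varphi^{2}f\,\Delta f
 = -\int_{\Sigma}\varphi^{2}|\nabla f|^{2}-2\int_{\Sigma}\varphi f\,\nabla\varphi\cdot\nabla f,
\]
which, combined with the PDE, gives
\[
\int_{\Sigma}\varphi^{2}|\nabla f|^{2}
=-2\int_{\Sigma}\varphi f\,\nabla\varphi\cdot\nabla f-2\int_{\Sigma}\kappa\varphi^{2}f^{2}+\lambda\int_{\Sigma}\varphi^{2}wf^{2}.
\]
A Cauchy--Schwarz / AM--GM on the first term (absorbing $\tfrac12\int\varphi^{2}|\nabla f|^{2}$ on the left) reduces matters to showing that each of
\[
\int_{\Sigma}f^{2}|\nabla\varphi|^{2},\qquad \int_{\Sigma}|\kappa|\varphi^{2}f^{2},\qquad \int_{\Sigma}\varphi^{2}wf^{2}
\]
is bounded by $C\|f\|_{L^{2}_{\ast}}^{2}$ uniformly in $R$.

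The third integral is trivially bounded by $\|f\|_{L^{2}_{\ast}}^{2}$. For the second, I would use \eqref{eq:gauss-curv-end}: at each end, $|\kappa|=O(|x|^{-2-2/k})$, which decays strictly faster than $w$, so $|\kappa|\le Cw$ for $|x|$ large; since $\kappa$ is bounded and $w$ is bounded below on any compact piece of $\Sigma$, we obtain $|\kappa|\le Cw$ globally, and the integral is again controlled by $\|f\|_{L^{2}_{\ast}}^{2}$. The key step is the first integral: from \eqref{est:gradient} we have $|\nabla\varphi|^{2}\le C|x|^{-2}(\log R)^{-2}$, and $|\nabla\varphi|$ is supported where $R\le|x|\le R^{2}$. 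On this annulus, $\log(2+|x|)\le 2\log(2+R)\le C\log R$, so
\[
\frac{|\nabla\varphi|^{2}}{w}\le C\,\frac{(1+|x|^{2})(\log(2+|x|))^{2}}{|x|^{2}(\log R)^{2}}\le C,
\]
uniformly in $R$. Hence $\int_{\Sigma}f^{2}|\nabla\varphi|^{2}\le C\|f\|_{L^{2}_{\ast}}^{2}$.

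Putting these together yields $\int_{\Sigma}\varphi^{2}|\nabla f|^{2}\le C\|f\|_{L^{2}_{\ast}}^{2}$ with $C$ independent of $R$. Since $\varphi^{2}\uparrow 1$ pointwise, monotone convergence delivers $\int_{\Sigma}|\nabla f|^{2}<\infty$, completing the proof. The only delicate point is the calibration of weights: the logarithmic factor in $w$ is exactly what is needed to match $|\nabla\varphi|^{2}$ on the annular support of the cut-off, which is why the extra $\log$ in $L^{2}_{\ast}$ is compatible with obtaining an $L^{2}$ gradient bound.
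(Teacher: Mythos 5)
Your proof is correct, and it takes a genuinely lighter route than the paper's. After the first integration by parts, the paper converts the cross term $-2\int\varphi f\,\nabla\varphi\cdot\nabla f=-\tfrac12\int\nabla\varphi^{2}\cdot\nabla f^{2}$ into $\int f^{2}\Delta\varphi^{2}=2\int f^{2}\varphi\Delta\varphi+2\int f^{2}|\nabla\varphi|^{2}$ by a \emph{second} integration by parts, and then has to invoke the estimate \eqref{est:laplacian} on $|\Delta\varphi^{\alpha}_{R}|$, which in turn relies on the minimal-surface identity $|x|\Delta|x|=2-|\nabla|x||^{2}$ and the decay \eqref{eqend} of $N\cdot\nabla_{\mathbb{R}^3}|x|$ at the ends. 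You sidestep all of that by absorbing the cross term with Cauchy--Schwarz/AM--GM into the left-hand side, so only the gradient bound \eqref{est:gradient} on $|\nabla\varphi|$ is needed. Your key calibration observation --- that $|\nabla\varphi|^{2}\leq C|x|^{-2}(\log R)^{-2}$ on the annulus $R\leq|x|\leq R^{2}$ is comparable to the weight $w=(1+|x|^{2})^{-1}(\log(2+|x|))^{-2}$ there, uniformly in $R$ --- is exactly the point on which both arguments turn. The paper's version keeps the $\Delta\varphi$ estimate because it is reused in Section \ref{sec:proof-main} (estimating the second term of \eqref{eq:I-123}), so it costs nothing to develop it once here; your version is more self-contained for this lemma alone. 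One small cosmetic point: to pass to the limit you invoked monotone convergence via $\varphi_{R}^{2}\uparrow1$, which holds (one checks $\psi^{\alpha}_{R}$ is nondecreasing in $R$ at fixed $x$), though it is slightly cleaner to just cite Fatou since all one needs is a uniform bound along some sequence $R\to\infty$.
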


\begin{proof}
Let $\varphi= \varphi_R$ as defined in Section \ref{sec:test}. We will prove that $\int_{\Sigma} \varphi^2 |\nabla f|^2 $ is uniformly bounded as $R\rightarrow \infty$.
First we compute:
\begin{align*}
\int_{\Sigma} \varphi^2 |\nabla f|^2 &= - \int_\Sigma \varphi^2 f \Delta f - \frac{1}{2} \int_{\Sigma} \nabla \varphi^2 \cdot \nabla f^2 \\
& = - \int_{\Sigma} 2\kappa \varphi^2 f^2 + \lambda \int_{\Sigma} \varphi^2 f^{2} (1+|x|^2)^{-1}(\log(2+|x|))^{-2} -\frac{1}{2} \int_{\Sigma} \nabla \varphi^2 \cdot \nabla f^2
\end{align*}
Since $\kappa$ decays super-quadratically by \eqref{eq:gauss-curv-end}, and $f\in L^2_\ast$ we only need to check the integrability of $\int_{\Sigma} \nabla \phi^2 \cdot \nabla f^2$ as $R\rightarrow\infty$. Then:
\begin{align}\label{eq:aux}
-\int_{\Sigma} \nabla \varphi^2 \cdot \nabla f^2 &=\int_{\Sigma} f^2 \Delta \varphi^2\nonumber\\
& = 2\int_{\Sigma} f^2 \varphi\Delta \varphi  + 2 \int_{\Sigma} f^2 |\nabla \varphi|^2
\end{align}
We analyze each term separately. First note that, by \eqref{est:laplacian}:
\begin{align}\label{eq:laplacian}
\left|\int_{\Sigma} f^2 \varphi\Delta \varphi \right| &\leq \int_{\Sigma} f^2 |\varphi||\Delta \varphi| \nonumber\\
&\leq C \underbrace{ \int_{\Sigma \cap (B_{R^2}\setminus B_R)} f^2 \frac{1}{|x|^2\log^2R}}_{(\textrm{I})}  + C \underbrace{\int_{\Sigma  \cap (B_{R^2}\setminus B_R) } f^2\frac{1}{  |x|^{2+2/k}\log R}}_{(\textrm{II})}.
\end{align}
where $k\geq1$ is the highest multiplicity of an end of $\Sigma$.
        	
Hence:
\begin{align*}
(\textrm{I})=\int_{\Sigma  \cap (B_{R^2}\setminus B_R) } f^2 \frac{1}{|x|^2\log^2R}  
& = \int_{\Sigma  \cap (B_{R^2}\setminus B_R) } f^2 \frac{\log^2|x|}{|x|^2 \log^2|x|  \log^2 R}\\
& \leq \int_{\Sigma  \cap (B_{R^2}\setminus B_R) } f^2 \frac{\log^2R^2}{|x|^2 \log^2|x|  \log^2 R}\\
& \leq 4 \int_{\Sigma\cap (B_{R^2}\setminus B_R)} f^2 \frac{1}{|x|^2 \log^2|x| }.
\end{align*}
This tends to zero as $R\to\infty$, since $f \in L^{2}_{\ast}$. 
    	
Furthermore, we find:
\begin{align*}
(\textrm{II})=\int_{\Sigma  \cap (B_{R^2}\setminus B_R) } f^2\frac{1}{|x|^{2+2/k}\log R} \leq 2 \int_{\Sigma\cap (B_{R^2}\setminus B_R)} f^2 \frac{1}{|x|^{2} \log^2|x|}  \frac{\log |x|}{  |x|^{2/k}},
\end{align*}
which also goes to zero as $R\rightarrow \infty$, since $f\in L^2_\ast$ and $|x|^{2/k}\gg\log |x|$ as $x\rightarrow\infty$.

Finally, for the second term in \eqref{eq:aux}:
\begin{align*}
\int_{\Sigma} f^2 |\nabla \varphi|^2  & \leq C \int_{\Sigma\cap(B_{R^2}\setminus B_R)} f^2 \frac{1}{|x|^2\log^2 R}\\
& \leq C \int_{\Sigma\cap(B_{R^2}\setminus B_R)} f^2 \frac{\log^2|x|}{|x|^2 \log^2|x|  \log^2 R}\\
& \leq C \int_{\Sigma\cap(B_{R^2}\setminus B_R)} f^2 \frac{\log^2R^2}{|x|^2 \log^2|x|  \log^2 R}\\
& \leq C \int_{\Sigma\cap(B_{R^2}\setminus B_R)} f^2 \frac{1}{|x|^2 \log^2|x| }.
\end{align*}
This tends to zero as $R\rightarrow\infty$, like before. This completes the proof. 
\end{proof}

\begin{lemm}\label{lem:forml2}
For $\omega$ a 1-form in $L^2_\ast(\Sigma) \cap \sH^1(\Sigma)$, we have that 
$$\int_{\Sigma}|\nabla \omega|^2 <\infty.$$
\end{lemm}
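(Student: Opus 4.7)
The plan is to mimic the proof of Proposition~\ref{prop:gradientl2}, with the Bochner--Weitzenb\"ock identity replacing the weighted eigenvalue equation as the basic pointwise input. Since $\omega \in \mathscr{H}^{1}(\Sigma)$ is harmonic in the Hodge sense and $\Sigma$ is two-dimensional with Gauss curvature $\kappa$, the standard Weitzenb\"ock formula for harmonic $1$-forms reduces to the pointwise identity
\[
\tfrac{1}{2}\, \Delta |\omega|^{2} \;=\; |\nabla \omega|^{2} + \kappa\, |\omega|^{2},
\]
where $\Delta$ is the intrinsic Laplacian on functions. Elliptic regularity ensures $\omega$ is smooth, so the identity holds classically.

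Fixing $\varphi = \varphi^{2}_{R}$ from Section~\ref{sec:test} with $\alpha = 2$, the plan is to show that $\int_{\Sigma} \varphi^{2}\, |\nabla \omega|^{2}$ stays bounded as $R \to \infty$ and then apply monotone convergence. Multiplying the Weitzenb\"ock identity by $\varphi^{2}$, integrating over $\Sigma$, and then moving the Laplacian onto $\varphi^{2}$ via integration by parts yields
\[
\int_{\Sigma} \varphi^{2}\, |\nabla \omega|^{2} \;=\; -\int_{\Sigma} \kappa\, \varphi^{2}\, |\omega|^{2} + \int_{\Sigma} |\omega|^{2}\, \varphi\, \Delta \varphi + \int_{\Sigma} |\omega|^{2}\, |\nabla \varphi|^{2},
\]
after using $\Delta \varphi^{2} = 2\varphi\, \Delta\varphi + 2|\nabla\varphi|^{2}$.

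From this point the analysis is essentially verbatim that of Proposition~\ref{prop:gradientl2} with $f^{2}$ replaced by $|\omega|^{2}$. The curvature term is controlled by the super-quadratic decay \eqref{eq:gauss-curv-end} of $\kappa$ combined with $\omega \in L^{2}_{*}(\Sigma)$: for $|x|$ sufficiently large, $|\kappa(x)| \leq C (1+|x|^{2})^{-1} (\log(2+|x|))^{-2}$ since $|x|^{2/k}$ dominates any power of $\log |x|$, so $\int_{\Sigma} |\kappa|\, \varphi^{2}\, |\omega|^{2}$ is bounded uniformly in $R$. The remaining cut-off error terms are handled by the bounds \eqref{est:gradient} and \eqref{est:laplacian}, which reduce them to annular integrals of $|\omega|^{2}$ weighted by $|x|^{-2}\log^{-2}|x|$ (plus a piece involving $|x|^{-2-2/k}$) over $\{R \leq |x| \leq R^{2}\}$; these are bounded by the tail $\int_{\Sigma \cap \{|x|\geq R\}} |\omega|^{2}\, (1+|x|^{2})^{-1} \log^{-2}(2+|x|)$, which vanishes as $R \to \infty$.

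The main obstacle in carrying this out is really just bookkeeping: one must set up the Weitzenb\"ock identity with the correct signs so that after integration by parts all derivatives land on $\varphi^{2}$, at which point the integrability built into the definition of $L^{2}_{*}(\Sigma)$, together with the super-quadratic decay of $\kappa$, closes out the estimate. No new analytic ingredients beyond those already developed in Section~\ref{sec:test} and Proposition~\ref{prop:gradientl2} are anticipated.
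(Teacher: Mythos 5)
Your proposal is correct and takes essentially the same approach as the paper: the paper's proof is the one-line reference "use the Bochner formula (as in Lemma 13 of \cite{CM16}) and argue as in Proposition \ref{prop:gradientl2}," and you have simply unpacked this, substituting the Weitzenb\"ock identity $\tfrac12\Delta|\omega|^2 = |\nabla\omega|^2 + \kappa|\omega|^2$ for the weighted eigenvalue equation and $|\omega|^2$ for $f^2$ throughout. The only trivial imprecision is that $\xi$ is not stipulated to be monotone, so Fatou's lemma rather than monotone convergence is the cleanest way to pass from the uniform bound on $\int_\Sigma \varphi_R^2|\nabla\omega|^2$ to the conclusion; this changes nothing.
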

\begin{proof}
Using the Bochner formula as in Lemma 13 of \cite{CM16}, the proof follows similarly to Proposition \ref{prop:gradientl2} above. 	
\end{proof}

We now prove that the weighted analogue of Fischer--Colbrie's result constructing the index on a complete minimal surface \cite[Proposition 2]{Fischer-Colbrie:1985} holds for $L^{2}_{\ast}(\Sigma)$. 
\begin{prop}\label{prop:weighted-FC}
	Suppose that $\Sigma$ has index $\Index(\Sigma)=k < \infty$ in the usual $L^{2}$-sense. Then there exists a $k$-dimensional subspace $W$ of $L^{2}_{\ast}(\Sigma)$ with an $L^{2}_{\ast}$-orthornormal basis of $L^{2}_{\ast}$-eigenfunctions for the stability operator $f_{1},\dots,f_{k}$. Letting the associated eigenvalues be $\lambda_{1},\dots,\lambda_{k}$, it holds that $\lambda_{i} < 0$. 
	
Moreover, we have $Q(\phi,\phi) \geq 0$ for $\phi \in C^{\infty}_{0}(\Sigma) \cap W^{\perp}$, where $W^{\perp}\subset L^{2}_{\ast}(\Sigma)$ is the $L^{2}_{\ast}(\Sigma)$ orthogonal complement of $W$. 
\end{prop}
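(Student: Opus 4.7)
My strategy is to realize the stability operator $L=-\Delta+2\kappa$ as a self-adjoint operator on $L^2_*(\Sigma)$ via the Friedrichs extension of $Q$ with form core $C^\infty_c(\Sigma)$, and then identify its negative spectrum with the (unweighted) Morse index through a compact exhaustion. The prerequisite is the semi-boundedness
\begin{equation*}
Q(\phi,\phi) \geq -C_0\,\|\phi\|_{L^2_*}^2 \qquad \text{for all } \phi\in C^\infty_c(\Sigma).
\end{equation*}
I would derive this from finite index: there is a compact $K\subset\Sigma$ with $Q\geq 0$ on $C^\infty_c(\Sigma\setminus K)$ (otherwise one produces infinitely many pairwise disjoint supported $Q$-negative directions, contradicting $\Index(\Sigma)<\infty$), and an IMS-type localization together with the fact that $w$ is uniformly positive on any compact set converts this unweighted bound on $K$ into the weighted bound above. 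The Friedrichs extension then exists and has spectrum in $[-C_0,\infty)$.

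Next, I would work with the exhaustion $\Omega_n = \Sigma\cap B_{R_n}$, $R_n\to\infty$, and solve the weighted Dirichlet problem $Lf=\lambda wf$ on each $\Omega_n$. Standard spectral theory gives discrete eigenvalues $\lambda_1^{(n)}\leq \lambda_2^{(n)}\leq \cdots$ with $L^2_*$-orthonormal eigenfunctions $f_i^{(n)}\in H^1_0(\Omega_n)$. Since $w>0$, the negative inertia of $Q$ on $H^1_0(\Omega_n)$ is independent of whether the $L^2$ or the $L^2_*$ inner product is used, so the number of negative $\lambda_i^{(n)}$ equals $\Index(\Omega_n)$, which stabilizes at $k$ for $n\geq n_0$. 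Domain monotonicity gives $\lambda_i^{(n+1)}\leq\lambda_i^{(n)}$, and combined with the semi-bound one gets $\lambda_i := \lim_n \lambda_i^{(n)}\in[-C_0,0)$ for $i=1,\dots,k$. Uniform local $H^1$-bounds (from $Q(f_i^{(n)})=\lambda_i^{(n)}$ plus $\|f_i^{(n)}\|_{L^2_*}=1$) and elliptic regularity then yield a subsequence converging locally smoothly to $f_i$ with $Lf_i=\lambda_i w f_i$.

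The hardest step will be verifying $f_i\in L^2_*(\Sigma)$ with $\|f_i\|_{L^2_*}=1$, i.e.\ that no mass escapes to infinity, and it is here that $\alpha$ large enters decisively. Testing the eigenfunction equation for $f_i^{(n)}$ against $(\varphi^\alpha_R)^2 f_i^{(n)}$ yields
\begin{equation*}
Q(\varphi^\alpha_R f_i^{(n)}) = \lambda_i^{(n)}\!\int_\Sigma w(\varphi^\alpha_R)^2 (f_i^{(n)})^2 + \int_\Sigma (f_i^{(n)})^2 |\nabla\varphi^\alpha_R|^2,
\end{equation*}
and the estimate of Section \ref{sec:test} gives $|\nabla\varphi^\alpha_R|^2 \leq \frac{C\alpha^2}{(\alpha-1)^2}\,w$ on the annulus where $\nabla\varphi^\alpha_R$ is supported. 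Combined with semi-boundedness applied to $\varphi^\alpha_R f_i^{(n)}$ and a choice of $\alpha$ sufficiently large that the error constant can be absorbed against $C_0+|\lambda_i^{(n)}|$, this produces a uniform tail estimate $\int_{|x|\geq R}(f_i^{(n)})^2 w\to 0$ as $R\to\infty$ independently of $n$. Hence $\|f_i\|_{L^2_*}=1$, pairwise $L^2_*$-orthogonality of the $f_i$ passes to the limit, and they are linearly independent.

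Finally, for the non-negativity of $Q$ on $C^\infty_c(\Sigma)\cap W^\perp$: given such $\phi$, we have $\phi\in H^1_0(\Omega_n)$ for $n$ large. Local convergence $f_i^{(n)}\to f_i$ together with the tail estimate make $\phi$ $L^2_*$-almost-orthogonal on $\Omega_n$ to $\mathrm{span}(f_1^{(n)},\dots,f_k^{(n)})$; since $\lambda_{k+1}^{(n)}\geq 0$, the Rayleigh principle on $\Omega_n$ gives $Q(\phi,\phi) \geq -o(1)\,\|\phi\|_{L^2_*}^2$, and letting $n\to\infty$ concludes. The real obstacle throughout is the tail estimate above; the precise weight $w=(1+|x|^2)^{-1}(\log(2+|x|))^{-2}$ with its logarithmic factor is tailored to make $|\nabla\varphi^\alpha_R|^2\lesssim w$ possible on the gradient annulus, and taking $\alpha$ large is what allows the resulting error constant to be made small enough to close the absorption argument---this is the subtlety flagged in the introduction.
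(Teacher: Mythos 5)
Your architecture---compact exhaustion, weighted Dirichlet eigenproblems on $\Sigma\cap B_{R_n}$, local $H^1$ compactness, and a tail estimate to prevent mass escape---matches the paper's, and the Friedrichs-extension framing is fine. The tail-estimate step, however, has two genuine gaps. First, you test against $(\varphi^\alpha_R)^2 f_i^{(n)}$ and then invoke the global semi-bound $Q\geq -C_0\|\cdot\|^2_{L^2_*}$; the resulting inequality is $(-C_0-\lambda_i^{(n)})\|\varphi^\alpha_R f_i^{(n)}\|^2_{L^2_*}\leq \int (f_i^{(n)})^2|\nabla\varphi^\alpha_R|^2$, and since $\lambda_i^{(n)}\geq -C_0$ the left-hand coefficient is nonpositive, so you learn nothing---and in any case $\|\varphi^\alpha_R f_i^{(n)}\|^2_{L^2_*}$ is a bulk quantity, not a tail. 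The right tool is the \emph{other} cutoff, $\eta=1-\varphi^\alpha_R$, together with the stability of the exterior $\Sigma\setminus B_R$ for $R>R_0$: then $Q(\eta f_{i,\rho})\geq 0$, combined with the same identity $Q(\eta f_{i,\rho})=\lambda_{i,\rho}\|\eta f_{i,\rho}\|^2_{L^2_*}+\int f_{i,\rho}^2|\nabla\eta|^2$, yields the usable bound $\epsilon_0\|\eta f_{i,\rho}\|^2_{L^2_*}\leq \int f_{i,\rho}^2|\nabla\eta|^2$ with a uniform $\epsilon_0=\min_i(-\lambda_{i,\rho_0})>0$.

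Second, and more fundamentally, the claim that taking $\alpha$ large absorbs the error constant is not correct. Your estimate $|\nabla\varphi^\alpha_R|^2\leq \frac{C\alpha^2}{(\alpha-1)^2}\,w$ on the gradient annulus is fine, but $\frac{\alpha^2}{(\alpha-1)^2}\to 1$, not $0$, as $\alpha\to\infty$: the supremum of $|\nabla\varphi^\alpha_R|^2/w$ over $B_{R^\alpha}\setminus B_R$ is attained near the outer edge $|x|\approx R^\alpha$, where $\log|x|\approx\alpha\log R$ exactly cancels the $(\alpha-1)^{-2}$ coming from the gradient. So the absorption you describe does not close, no matter how large $\alpha$ is taken. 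The argument can nonetheless be completed by iterating at fixed $\alpha$: set $\mu(S)=\int_{\Sigma\setminus B_S} w\, f_{i,\rho}^2$ and note that the corrected estimate (with $\alpha=2$) reads $\epsilon_0\,\mu(R^2)\leq C_\ast\bigl(\mu(R)-\mu(R^2)\bigr)$, whence $\mu(R^2)\leq\theta\,\mu(R)$ with $\theta=\frac{C_\ast}{\epsilon_0+C_\ast}<1$ independent of $\rho$ and $R$. Iterating gives $\mu(R^{2^k})\leq\theta^k\to 0$ uniformly for $\rho\geq R^{2^k}$, which is precisely the uniform tail decay needed; no appeal to large $\alpha$ is required.
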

\begin{proof}
Choose $R_0$ sufficiently large so that $k=\Index(\Sigma) = \Index (\Sigma\cap B_{\rho})$ for $\rho > R_{0}$. Let $\{f_{1,\rho},\dots,f_{k,\rho}\}$ and $\{\lambda_{1,\rho},\dots,\lambda_{k,\rho}\}$ denote the $L^{2}_{\ast}(\Sigma \cap B_{\rho})$-weighted Dirichlet eigenfunctions and eigenvalues, respectively, of an $L^{2}_{\ast}(\Sigma \cap B_{\rho})$-orthonormal basis constructed by minimizing the Rayleigh quotient 
\begin{equation*}
Q(\phi,\phi)/\Vert\phi\Vert^{2}_{L^{2}_{\ast}(\Sigma\cap B_{\rho})}
\end{equation*}
on $C^\infty_c(\Sigma\cap B_\rho)$ (note that there are exactly $k$ such eigenfunctions since the $L^2$ and $L^2_{\ast}$ norms are equivalent in $B_{\rho}$). It is not hard to check that this implies that
\begin{equation*}
\Delta f_{i,\rho} - 2\kappa f_{i,\rho} + \lambda_{i,\rho}(1+|x|^{2})^{-1}(\log(2+|x|))^{-2} f_{i,\rho} = 0
\end{equation*}
in $\Sigma \cap B_\rho$ and $f=0$ on $\Sigma \cap \partial B_\rho$.

Our goal is to prove that $f_{i,\rho}$ converges to the desired eigenfunctions as for an appropriate sequence of $\rho$ going to infinity. To that end, note that $\Sigma\setminus B_{R}$ must be stable for any $R>R_0$. Thus, if $\eta$ is any smooth function vanishing in $B_R$, the stability inequality implies that for any $\phi\in C^{\infty}_{c}(\Sigma)$,
\begin{equation}\label{eq:outer-stable}
-\int_{\Sigma} 2\kappa (\eta\phi)^{2}\leq \int_{\Sigma}|\nabla (\eta\phi)|^{2} = \int_{\Sigma} \eta^{2}|\nabla\phi|^{2}+ 2\eta\phi\bangle{\nabla\eta,\nabla\phi}+ \phi^{2}|\nabla\eta|^{2}.
\end{equation}	
In particular, arguing this as in \cite[Proposition 2]{Fischer-Colbrie:1985} and \cite[Proposition 8]{CM16}, choosing $\eta=1-\varphi_R$,
\begin{align*}
\int_{\Sigma} (1-\eta^{2})(|\nabla \phi|^{2} + 2\kappa \phi^{2}) & \leq Q(\phi,\phi) + \int_{\Sigma} 2 \eta\phi \bangle{\nabla \eta,\nabla \phi} + \phi^{2} |\nabla \eta|^{2}\\
& \leq Q(\phi,\phi) + \int_{\Sigma\cap B_{R^{2}}} \eta^{2} |\nabla \phi|^{2} + 2\phi^{2} |\nabla \eta|^{2}\\
& \leq 2 Q(\phi,\phi) + \int_{\Sigma\cap B_{R^{2}}} 2\phi^{2} |\nabla \eta|^{2} - 2\kappa \phi^{2}.
\end{align*}
Thus, we find
\begin{equation}\label{eq:W12-bd-Q}
\int_{\Sigma \cap B_{R}} |\nabla\phi|^{2}  \leq 2Q(\phi,\phi) +\underbrace{\left( \frac{C}{R^2\log^2R}+ \sup_{\Sigma \cap B_{R^2}}4|\kappa|\right)}_{:=2C_{R}} \int_{\Sigma\cap B_{R^2}}  \phi^{2}.
\end{equation}
Because $\max\{\lambda_{1,\rho},\dots,\lambda_{k,\rho}\}$ is decreasing with $\rho$, there is $\epsilon_{0}>0$ so that for $\rho\geq R>R_{0}$, we have $\lambda_{i,\rho} < -\epsilon_{0}$. On the other hand, the inequality we have just proven shows that $\lambda_{i,\rho}\geq -C_{R} (1+R^2)\log^2(2+R^{2})$. 

Moreover, \eqref{eq:W12-bd-Q} yields
\begin{equation*}
\int_{\Sigma \cap B_{R}} f_{i,\rho}^{2} + |\nabla f_{i,\rho}|^{2} \leq (1+2C_{R}) \int_{\Sigma\cap B_{R^2}} f_{i,\rho}^{2} \leq (1+2C_{R}) (1+R^2)\log^2(2+R^{2}).
\end{equation*}
From this, as in \cite{Fischer-Colbrie:1985,CM16}, for every $i$ fixed, a diagonal argument (together with the fact that $W^{1,2}(\Sigma\cap B_{R})$ compactly embeds into $L^{2}_{\ast}(\Sigma\cap B_{R})$) implies the existence of a sequence $\rho_j\rightarrow\infty$ and a function $f_{i}\in L^2_\ast(\Sigma)$ such that 
\[
f_{i,\rho_j} \rightarrow f_{i}\quad\text{in}\,\, L^2_\ast(K)
\]
for any compact domain $K$ of $\Sigma$. Because $\lambda_{i,\rho}$ is uniformly bounded from below, it is clear that $f_{i}$ is an $L^{2}_{*}$-eigenfunction with eigenvalue $\lambda_{i} = \lim_{j\to\infty}\lambda_{i,\rho_{j}}<0$.

To finish our proof we must show that no mass escapes to infinity, i.e., that $\Vert f_{i} \Vert_{L^2_\ast(\Sigma)}=1$. To this end, we return to \eqref{eq:outer-stable} and plug in $\phi = f_{i,\rho}$ (extending $\phi$ to be zero outside of $B_{\rho}$) and consider a smooth function $\eta$ vanishing in $B_R$ (for some $R>R_0$) to be chosen below. We compute:
\begin{align*}
-\int_{\Sigma} 2\kappa (\eta f_{i,\rho})^{2} &\leq  \int_{\Sigma} \eta^{2}|\nabla f_{i,\rho}|^{2}+ 2\eta f_{i,\rho}\bangle{\nabla\eta,\nabla f_{i,\rho}}+ f_{i,\rho}^{2}|\nabla\eta|^{2}\\
&=  \int_{\Sigma} \eta^{2}|\nabla f_{i,\rho}|^{2}+ \frac 1 2\bangle{\nabla\eta^{2},\nabla f_{i,\rho}^{2}}+ f_{i,\rho}^{2}|\nabla\eta|^{2}\\
&=  \int_{\Sigma} \eta^{2}|\nabla f_{i,\rho}|^{2}- \eta^{2}\left(  f_{i,\rho}\Delta  f_{i,\rho} + |\nabla  f_{i,\rho}|^{2} \right)+ f_{i,\rho}^{2}|\nabla\eta|^{2}\\
&=  \int_{\Sigma}  f_{i,\rho}^{2}|\nabla\eta|^{2}- \eta^{2} f_{i,\rho}\Delta  f_{i,\rho}\\
&=  \int_{\Sigma}  f_{i,\rho}^{2}|\nabla\eta|^{2}- 2\kappa(\eta f_{i,\rho})^{2} + \lambda_{i,\rho} (1+|x|^{2})^{-1}(\log(2+\log|x|))^{-2}(\eta f_{i,\rho})^{2}.
\end{align*}
Hence,
\begin{equation}\label{eq:cutoff-caccioppoli}
\epsilon_{0}\Vert \eta f_{i,\rho}\Vert_{L^{2}_{\ast}(\Sigma)}^{2} \leq (-\lambda_{i,\rho}) \Vert \eta f_{i,\rho}\Vert_{L^{2}_{\ast}(\Sigma)}^{2} \leq \int_{\Sigma}  f_{i,\rho}^{2}|\nabla\eta|^{2}
\end{equation}
We wish to choose
\[
\eta(x) = \begin{cases} 0 & |x| \leq R \\  \frac{1}{\log \beta} \log \left( \frac{\log |x|}{\log R} \right) &  R < |x| < e^{\beta} R \\ 1 & |x| \geq e^\beta R, \end{cases}
\]
where $\beta > 1$ will be fixed large below. To justify this choice of $\eta$, note that we can choose $\eta_\ell \in C^\infty(\Sigma)$ with $\supp \eta_\ell \subset \Sigma \setminus B_R$ so that $\eta_\ell \to \eta$ and $\nabla \eta_\ell \to \nabla \eta$ almost everywhere. Because $\eta_\ell$ is a valid choice in \eqref{eq:cutoff-caccioppoli}, we can send $\ell\to\infty$ using dominated convergence. This justifies taking the given $\eta$ in \eqref{eq:cutoff-caccioppoli}. 

Now, we note that for $R < |x| < e^\beta R$, 
\[
|\nabla \eta| \leq \frac{1}{\log \beta} \frac{1}{ |x| |\log |x|} 
\]
Combined with \eqref{eq:cutoff-caccioppoli}, we thus find 
\[
\Vert f_{i,\rho} \Vert_{L^2_\ast(\Sigma\setminus B_{e^\beta R})} \leq \frac{C}{(\log\beta)^2} \Vert f_{i,\rho}\Vert_{L^2_\ast(\Sigma)} = \frac{C}{(\log\beta)^2} 
\]
Now, for $\delta>0$ fixed, take $\beta$ sufficiently large so that $\frac{C}{(\log\beta)^2} < \delta$. Then, we have a compact set $K=\overline{\Sigma \cap B_{e^\beta R}}$ so that
\[
||f_{i,\rho_j}||_{L^2_\ast(\Sigma\setminus K)} < \delta
\]
for all $i \in \{1,\dots,k\}$ and $j$ sufficiently large so that $\rho_j\geq e^{\beta} R$. This proves that $f_{i,\rho_{j}}$ does not loose any mass at infinity. The proof may now be completed as in \cite[p.\ 126]{Fischer-Colbrie:1985}. 
\end{proof}

\section{Proof of Theorem \ref{theo:main}}\label{sec:proof-main}

We first recall the notation of  \cite{CM16}. For $\omega$ harmonic 1-form, we write:
\begin{equation*}
X_{\omega} : = (\bangle{\omega,dx_{1}},\bangle{\omega,dx_{2}},\bangle{\omega,dx_{3}}).
\end{equation*}
By \cite[Lemma 1]{Ros:one-sided}, we have
\begin{equation*}
\Delta X_{\omega} -2\kappa X_{\omega} = 2 \bangle{ \nabla \omega, h} N,
\end{equation*}
where this equation is to be interpreted in the component by component sense. Here, $h$ is the second fundamental form. Moreover, if $\bangle{\nabla \omega, h} \equiv 0$, then $\omega\in \Span\{*dx^{1},*dx^{2},*dx^{3}\}$ by \cite[Lemma 1]{Ros:one-sided}.

As in \cite{CM16}, for vector fields $X$ and $Y$ along $\Sigma$ with components $(X_{1},X_{2},X_{3})$ and $(Y_{1},Y_{2},Y_{3})$, we denote by $Q(X,Y)$ the sum $\sum_{i=1}^{3}Q(X_{i},Y_{i})$. 
For example, for a vector field $X$ along $\Sigma$, we have that 
\begin{equation*}
Q(X,X) = - \int_{\Sigma} \bangle{\Delta X - 2\kappa X,X},
\end{equation*}
where the integrand is the Euclidean inner product of the following vector fields along $X$
\begin{equation*}
(\Delta X_{1}-2\kappa X_{1},\Delta X_{2}-2\kappa X_{2},\Delta X_{3}-2\kappa X_{3}) \qquad \text{and} \qquad (X_{1},X_{2},X_{3}).
\end{equation*}

By Proposition \ref{prop:weighted-FC}, there are $L^2_\ast$-eigenfunctions $f_{1},\dots,f_{k} \in L^{2}_{\ast}(\Sigma)$ which span $W \subset L^{2}_{\ast}(\Sigma)$ and so that for $\phi \in C^{\infty}_{0}(\Sigma)\cap W^{\perp}$, we have $Q(\phi,\phi)\geq0$. By Proposition \ref{prop:dim}, $V := L^{2}_{\ast}(\Sigma)\cap\sH^{1}(\Sigma)$ has dimension 
\[
2g+2\displaystyle\sum_{j=1}^{r}(d_j+1)-2
\] 
Suppose that $\omega \in V$ satisfies $ X_{\omega} \in W^{\perp}$ (where the orthogonal complement is taken with respect to the $L^{2}_{\ast}$-inner product). We will prove that $\omega \in \Span \{\mbox{$*$}dx_{1},\mbox{$*$}dx_{2},\mbox{$*$}dx_{3}\}$. This finishes the proof (cf.\ \cite[\S 5]{CM16}).

Suppose that $\omega \in V$ satisfies $X_{\omega}\in W^{\perp}$. Following \cite{Ros:one-sided,CM16}, we will show that $Q(X_{\omega},Y) = 0$ for any compactly supported smooth vector field $Y$ with $Y \in W^{\perp}$. Choose $R$ sufficiently large so that $B_{R}$ contains the support of $Y$. We set
\begin{equation*}
X_{t} : = \varphi_{R}(X_{\omega} + tY + f_{1}\vec{c}_{1}+\dots+f_{k}\vec{c}_{k}),
\end{equation*}
where $\varphi_{R}$ is the test function constructed in Section \ref{sec:test}. Here, the vectors $\vec{c}_{j} \in \RR^{3}$ depend on $X_{\omega}$, $\varphi_{R}$ and are chosen so that $X_{t} \in W^{\perp}$. In particular, this condition means that 
\begin{equation*}
\int_{\Sigma} \varphi_{R}(X_{\omega} +  f_{1} \vec{c}_{1}+\dots+f_{k}\vec{c}_{k})f_{j} (1+|x|^2)^{-1}(\log(2+|x|))^{-2} = 0,
\end{equation*}
where we have used the fact that $Y \in W^{\perp}$ and $\varphi_{R} Y = Y$. Because $X_{\omega},f_{j} \in L^{2}_\ast(\Sigma)$ and the $f_{1},\dots,f_{k}$ form an $L^{2}_{\ast}$-orthonormal basis for $W$, the dominated convergence theorem guarantees that the $\vec{c}_{j}$ tend to $0$ when sending $R\rightarrow\infty$.

As in \cite{Ros:one-sided,CM16}, the stability inequality implies that:
\begin{align}
\notag Q(X_{\omega},Y)^{2} & \leq Q(Y,Y)\\
\label{eq:I-II-III} & \times  \left(\underbrace{Q(\varphi_{R} X_{\omega},\varphi_{R} X_{\omega})}_{(\textrm{I})} + 2\sum_{i=1}^{k} \underbrace{Q(\varphi_{R} X_{\omega},\varphi_{R} f_{i}\vec{c}_{i})}_{(\textrm{II})}  + \sum_{i,j=1}^{k} \underbrace{Q(\varphi_{R} f_{i}\vec{c}_{i},\varphi_{R}  f_{j}\vec{c}_{j})}_{(\textrm{III})} \right)
\end{align}
We claim that the term in parenthesis tends to zero as  $R\rightarrow\infty$. To show this, we consider each term in \eqref{eq:I-II-III} separately. We have 
\begin{align*}
(\textrm{I}) = Q(\varphi_{R} X_{\omega},\varphi_{R} X_{\omega}) & = \int_{\Sigma} (|\nabla (\varphi_{R} X_{\omega})|^{2} + 2\kappa (\varphi_{R})^{2} |X_{\omega}|^{2})\\
& = - \int_{\Sigma} \bangle{ \Delta (\varphi_{R} X_{\omega}) - 2\kappa \varphi_{R} X_{\omega}, \varphi_{R} X_{\omega}}\\
& = - \int_{\Sigma} (\varphi_{R})^{2} \bangle{ \Delta  X_{\omega} - 2\kappa X_{\omega}, X_{\omega}} \\
& \qquad  - \int_{\Sigma}  ( \varphi_{R}\Delta \varphi_{R} |X_{\omega}|^{2} + 2 \bangle{d \varphi_{R} \otimes  X_{\omega}, \varphi_{R} \nabla X_{\omega}}) \\
& = - \int_{\Sigma} \left( \varphi_{R}\Delta \varphi_{R} |X_{\omega}|^{2} + \frac 12  \bangle{\nabla (\varphi_{R})^{2}, \nabla |X_{\omega}|^{2}} \right) \\
& = \int_{\Sigma} |\nabla \varphi_{R}|^{2} |X_{\omega}|^{2}.
\end{align*}
Since $|\nabla \varphi_R|\leq \frac{C}{|x|\log R}$, we have:
\begin{align*}
\int_{\Sigma} |\nabla \varphi_{R}|^{2} |X_{\omega}|^{2}& \leq \int_{\Sigma\cap (B_{R^2}\setminus B_R)}|X_{\omega}|^{2} \frac{C}{|x|^2\log^2 R}\\ 
& \leq {C}\int_{\Sigma\cap (B_{R^2}\setminus B_R)}|X_{\omega}|^{2} \frac{1}{|x|^2 \log^2|x|}\frac{\log^2|x|}{\log^2 R}\\
\intertext{Note that $\max_{r\in[R,R^{2}]}\frac{\log^2 r}{\log^2 R} = 4$ so taking $C>0$ larger, we find:}
& \leq {C}\int_{\Sigma\cap (B_{R^2}\setminus B_R)}|X_{\omega}|^{2} \frac{1}{|x|^2 \log^2|x|}\frac{\log^2|x|}{\log^2 R}\\
&\leq C\int_{\Sigma\cap (B_{R^2}\setminus B_R)}|X_{\omega}|^{2} \frac{1}{|x|^2 \log^2|x|}\\
&\leq C\int_{\Sigma\setminus B_R}|X_{\omega}|^{2} \frac{1}{|x|^2 \log^2|x|}
\end{align*}
Since $|X_\omega|\in L^2_\ast(\Sigma)$, this converges to $0$ as  $R\rightarrow \infty$.

For the second term, we compute:
\begin{align}
\notag (\textrm{II}) & = Q(\varphi_{R} X_{\omega},\varphi_{R} f_{i}\vec{c}_{i}) \\
\notag & = -\int_{\Sigma} \varphi_{R} \bangle{X_{\omega},\vec{c}_{i}}\left( \Delta(\varphi_{R} f_{i}) - 2\kappa \varphi_{R} f_{i} \right)\\
\notag & =  -\int_{\Sigma} (\varphi_{R})^{2} \bangle{X_{\omega},\vec{c}_{i}}\left( \Delta f_{i} - 2\kappa f_{i} \right) - \int_{\Sigma}\varphi_{R} \bangle{X_{\omega},\vec{c}_{i}}(\Delta \varphi_{R} f_{i} + 2\bangle{\nabla \varphi_{R},\nabla f_{i}})\\
\label{eq:I-123} & =  \lambda_{i} \int_{\Sigma} (\varphi_{R})^{2} \bangle{X_{\omega},\vec{c}_{i}} f_{i}(1+|x|^2)^{-1}(\log(2+|x|))^{-2}- \int_{\Sigma}\varphi_{R} \Delta \varphi_{R} \bangle{X_{\omega},\vec{c}_{i}} f_{i}  \\
& \notag \qquad - 2\int_{\Sigma} \varphi_{R}\bangle{X_{\omega},\vec{c}_{i}} \bangle{\nabla \varphi_{R},\nabla f_{i}}.
\end{align}
The first term in \eqref{eq:I-123} tends to zero as $R\to\infty$ by the dominated convergence and choice of $X_{\omega}\in W^{\perp}$. The second term in \eqref{eq:I-123} tends to zero as follows. By \eqref{est:laplacian}:
\begin{align*}
\left| \int_{\Sigma}\varphi_{R}\Delta \varphi_{R} \bangle{X_{\omega},\vec{c}_{i}} f_{i} \right| \leq&  |\vec{c}_{i}| \int_{\Sigma  \cap (B_{R^2}\setminus B_R) } \frac{C}{|x|^{2}\log^2R}|X_{\omega}||f_{i}| \\
&+ |\vec{c}_{i}| \int_{\Sigma  \cap (B_{R^2}\setminus B_R)} \frac{C}{  |x|^{2+2/k}\log R}|X_{\omega}||f_{i}|.
\end{align*}
Since $X_{\omega},f_{i} \in L^2_\ast(\Sigma)$, by arguing as in \eqref{eq:laplacian}, the above integrals tend to zero as $R\rightarrow\infty$.

For third term in \eqref{eq:I-123}, we use H\"older's inequality:
\begin{align*}
\left| \int_{\Sigma}\varphi_{R} \bangle{X_{\omega},\vec{c}_{i}} \bangle{\nabla \varphi_{R},\nabla f_{i}} \right| & \leq |\vec{c}_{i}| \left( \int_{\Sigma} |\nabla\varphi_{R}|^{2} |X_{\omega}|^{2}\right)^{\frac 12} \left( \int_{\Sigma} |\nabla f_{i}|^{2}\right)^{\frac 12}.
\end{align*}
Since $|\nabla f_i| \in L^2$ by Proposition \ref{prop:gradientl2}, $\vec{c}_{i} \rightarrow 0$ as $R\rightarrow\infty$, and $\int_{\Sigma} |\nabla\varphi_{R}|^{2} |X_{\omega}|^{2}$ also converges to zero, this also converges to zero as $R\to\infty$.  

Finally, we have that the third term in \eqref{eq:I-II-III} satisfies
\begin{align*}
(\textrm{III}) & = Q(\varphi_{R} \vec{c}_{i} f_{i},\varphi_{R} \vec{c}_{j} f_{j}) \\
& = -\frac 12 \bangle{\vec{c}_{i},\vec{c}_{j}} \int_{\Sigma} \varphi_{R} f_{j}(\Delta(\varphi_{R} f_{i}) -2\kappa \varphi_{R} f_{i})  - \frac 1 2 \bangle{\vec{c}_{i},\vec{c}_{j}}  \int_{\Sigma} \varphi_{R} f_{i}(\Delta(\varphi_{R} f_{j}) -2\kappa \varphi_{R} f_{j}) \\
& = \frac 12 (\lambda_{i} + \lambda_{j})\bangle{\vec{c}_{i},\vec{c}_{j}}  \int_{\Sigma}\varphi_{R}^{2} f_{i}f_{j}(1+|x|^2)^{-1}(\log(2+|x|))^{-2}  - \bangle{\vec{c}_{i},\vec{c}_{j}}  \int_{\Sigma}\varphi_{R}\Delta\varphi_{R} f_{i}f_{j} \\
& \qquad -\bangle{\vec{c}_{i},\vec{c}_{j}}  \int_{\Sigma} \varphi_{R} \bangle{\nabla \varphi_{R},  f_{i}\nabla f_{j} + f_{j}\nabla f_{i}}\\
& = \frac 12 (\lambda_{i} + \lambda_{j}) \bangle{\vec{c}_{i},\vec{c}_{j}} \int_{\Sigma}\varphi_{R}^{2} f_{i}f_{j}(1+|x|^2)^{-1}(\log(2+|x|))^{-2} +\bangle{\vec{c}_{i},\vec{c}_{j}}  \int_{\Sigma} |\nabla\varphi_{R}|^{2}f_{i}f_{j}.
\end{align*}
This tends to zero as $R\to\infty$ because the $\vec{c}_{i}$ are tending to zero and, by arguing as above, so does $\int_{\Sigma} |\nabla\varphi_{R}|^{2}|f_{i}f_{j}|$, since $f_{i},f_{j} \in L^2_\ast(\Sigma)$.

The above computations show that $Q(X_{\omega},Y) = 0$ for all compactly supported smooth vector fields $Y \in W^{\perp}$. Finally, fix an arbitrary smooth vector field $\tilde Y \in W^{\perp}$ and let 
\begin{equation*}
\tilde Y_{R} : = \varphi_{R}(\tilde Y +f_{1}\vec{c}_{1}+\dots+f_{k}\vec{c}_{k}),
\end{equation*}
where the $\vec{c}_{j}$ are chosen so that $\tilde Y_{R} \in W^{\perp}$. Observe (as above) that $\vec{c}_{j}\to 0$ as $R\to\infty$ by the dominated convergence theorem. Hence, because $\tilde Y_{R}$ has compact support, we have that
\begin{align*}
0 & = Q(X_{\omega},\tilde Y_{R}) \\
& = -\int_{\Sigma}\bangle{\Delta X_{\omega} - 2\kappa X_{\omega}, \tilde Y_{R}}\\
& = - 2 \int_{\Sigma}\bangle{\nabla \omega,h} \bangle{N,\tilde Y_{R}}.
\end{align*}
Lemma \ref{lem:forml2} shows that $|\nabla \omega|\in L^{2}(\Sigma)$, and because the second fundamental form satisfies $|h| \leq (1+|x|^{2})^{-\frac 12(1+\frac 1 k)}$ (cf.\ \eqref{eq:gauss-curv-end}), we may use the dominated convergence theorem to see that
\begin{equation*}
\int_{\Sigma}\bangle{\nabla \omega,h} \bangle{N,\tilde Y} = 0.
\end{equation*}
Similarly, we may show that for any vector $\vec{\alpha} \in \RR^{3}$ and eigenfunction $f_{i}\in W$ from Proposition \ref{prop:weighted-FC}, then
\begin{equation*}
\int_{\Sigma}\bangle{\nabla \omega,h} \bangle{N, f_{i}\vec{\alpha}} = 0.
\end{equation*}
Putting this together, we obtain $\bangle{\nabla\omega,h} = 0$. Thus, $\omega \in\Span \{\mbox{$*$}dx_{1},\mbox{$*$}dx_{2},\mbox{$*$}dx_{3}\}$. This completes the proof. 

\section{Proof of Theorem \ref{theo:nonor}}\label{sec:nonor}

We will prove Theorem \ref{theo:nonor} by essentially redoing the proof of Theorem \ref{theo:main}  for anti-invariant harmonic 1-forms in $L^2_\ast({\widehat{\Sigma}})$, where $\widehat{\Sigma}$ is the two-sided double cover of $\Sigma$. 

Note that the space $\mathscr{H}^1(\widehat{\Sigma})$ decomposes as 
$$\sH^1(\widehat{\Sigma}) = \sH_{+}^1(\widehat{\Sigma}) \oplus \sH_{-}^1(\widehat{\Sigma}),$$
where $\sH_{\pm}^1(\widehat{\Sigma}) = \{\omega\in\sH^1(\widehat{\Sigma})\,|\, \tau^\ast \omega = \pm\omega \}$. Note that the Hodge star operator $\mbox{$*$}$ of $\widehat{\Sigma}$ gives an isomorphism between $\sH_{+}^1(\widehat{\Sigma})$  and  $\sH_{-}^1(\widehat{\Sigma})$. In particular, the dimension of $\sH_{-}^1(\widehat{\Sigma}) \cap L^2_\ast(\widehat{\Sigma})$ is equal to $g + \displaystyle2\sum_{j=1}^{r} d_j+1 -1$. 

The key point is that if $\omega \in\sH_{-}^1(\widehat{\Sigma}) \cap L^2_\ast(\widehat{\Sigma})$ then $X_\omega \circ \tau = -X_\omega$, so its coordinates can be used as test functions for $Q$ as in the proof of Theorem \ref{theo:main}: suppose $\omega \in\sH_{-}^1(\widehat{\Sigma}) \cap L^2_\ast(\widehat{\Sigma})$ and $Y:\widehat{\Sigma}\rightarrow\mathbb{R}^3$ is compactly supported  with $Y\circ\tau=-Y$, we consider  the cutoff function $\varphi_R:\widehat{\Sigma}\rightarrow R$, taking $R$ big enough so that $\varphi_{R}\equiv 1$ on the support of $Y$.  Then, since $\varphi_{R}\circ\tau=\varphi_{R}$, the vector
$$X_t=\varphi_R X +tY=\varphi_R (X+tY)$$
is anti-invariant. Given this, we can argue precisely as for Theorem \ref{theo:main}.

\section{On the deformation family of the Costa surface}\label{sec:costa}

Consider $\{\Sigma_{t}\}_{t\geq 1}$ the $1$-parameter family of embedded genus one minimal surfaces with three ends, see Costa \cite{Costa:Invent}. Recall that each $\Sigma_t$ is conformally equivalent to $\CC / L(it) \setminus \{\pi(0),\pi(1/2),\allowbreak\pi(it/2)\}$, where $L(it)=\{m+int~|~m,n\in\mathbb{Z}\}$ and $\pi:\mathbb{C}\rightarrow \CC / L(it)$ is the canonical projection. We way arrange that the following isometries of $\RR^{3}$:
\[
\tau_{1}(x_{1},x_{2},x_{3}) = (-x_{1},x_{2},x_{3}), \qquad \tau_{2}(x_{1},x_{2},x_{3}) = (x_{1},-x_{2},x_{3})
\]
descend to $\overline{\Sigma_{t}}$ as reflections in $x$ and $y$ coordinates through the center of $L(it)$, where we write $z=x + i y$ for the canonical coordinate of $\CC$, see \cite{Costa:Invent, HoffmanKarcher}.

 By work of Nayatani \cite{Nayatani:CHM-index}, we know that $\Index(\Sigma_{1}) = 5$ and, by Choe \cite{Choe:vision}, that $\Index(\Sigma_{t})\geq3$ for all $t$. Suppose that $\Index(\Sigma_{t})=3$ for certain parameter $t$. We will show that this leads to a contradiction. 

\subsection{Decomposing the $1$-form argument with respect to the symmetries} By Proposition \ref{prop:dim}, $\sH^1(\Sigma_t) \cap L^2_\ast(\Sigma_t)$ has dimension 12. Thus, if we write $\sH$ for the subspace of such $1$-forms on $\Sigma_{t}$ that are $L^{2}_\ast$-orthogonal to $*dx^{1},*dx^{2},*dx^{3}$, then $\dim \sH =9$. 

\begin{lemm}
Suppose that 
\[
\sH = \sH^{++}\oplus \sH^{+-} \oplus \sH^{-+} \oplus \sH^{--}.
\]
is the decomposition of $\sH$ into eigenspaces of $\tau_{1},\tau_{2}$. Then:
\begin{align*}
\dim \sH^{++} & = 2\\
\dim \sH^{+-} & = 3\\
\dim \sH^{-+} & = 3\\
\dim \sH^{--} & = 1.
\end{align*}
\end{lemm}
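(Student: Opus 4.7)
My plan is to realize $\sH^1(\Sigma_t) \cap L^2_\ast(\Sigma_t)$ as the real part of the complex vector space $\Omega$ of meromorphic $1$-forms on the torus $\overline{\Sigma_t}$ with poles of order at most $d_j + 1 = 2$ at each puncture, exactly as in the proof of Proposition \ref{prop:dim}. This fits into the short exact sequence of complex vector spaces
\[
0 \to H^{1,0}(\overline{\Sigma_t}) \to \Omega \to \mathrm{PP} \to 0,
\]
where $\mathrm{PP}$ denotes principal parts $\sum_j(a_j/z_j^2 + b_j/z_j)$ subject to the residue constraint $\sum_j b_j = 0$. Taking real parts gives a decomposition (of real dimension $2 + 10 = 12$) which is equivariant under $\tau_1^\ast, \tau_2^\ast$, so it suffices to compute the $(\tau_1, \tau_2)$-sign decomposition on each piece.

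Next I would verify, using that $\tau_1, \tau_2$ descend to $\overline{\Sigma_t} = \CC / L(it)$ as $z \mapsto \overline{z}$ and $z \mapsto -\overline{z}$, that each of the three punctures $\pi(0), \pi(1/2), \pi(it/2)$ is individually fixed by both involutions (they lie among the four $2$-torsion fixed points of the holomorphic involution $\tau_1 \tau_2 \colon z \mapsto -z$), and that one may choose a local coordinate $w$ at each puncture in which $\tau_1 \colon w \mapsto \overline{w}$ and $\tau_2 \colon w \mapsto -\overline{w}$. On the holomorphic piece $\re H^{1,0} = \mathrm{span}_\RR\{dx, dy\}$ a direct check gives $dx \in (+-)$ and $dy \in (-+)$, producing the decomposition $(0, 1, 1, 0)$ for signs $(++, +-, -+, --)$.

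For the principal parts piece, a short computation in the local coordinate $w$ establishes that the real $1$-forms $\re(dw/w^2), \im(dw/w^2), \re(dw/w), \im(dw/w)$ transform under $(\tau_1, \tau_2)$ with types $(+,-), (-,+), (+,+), (-,-)$, respectively. Hence a real principal part lies in the $(\epsilon_1, \epsilon_2)$-eigenspace iff at each puncture only the local basis element of type $(\epsilon_1, \epsilon_2)$ has nonzero coefficient. The $(+,-)$ and $(-,+)$ types arise only from the $dw/w^2$ terms, which are unconstrained and therefore contribute $3$ real dimensions each; the $(+,+)$ and $(-,-)$ types arise only from $dw/w$ terms, whose coefficients are subject to the real respectively imaginary part of the residue constraint $\sum_j b_j = 0$, contributing $3 - 1 = 2$ dimensions each. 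Adding in the holomorphic piece yields the total decomposition $V^{\pm\pm} = (2, 4, 4, 2)$.

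Finally, since each $\tau_i$ reverses orientation of $\Sigma_t$, we have $\tau_i^\ast(\ast dx^j) = -\ast(\tau_i^\ast dx^j)$, which places $\ast dx^1 \in V^{+-}$, $\ast dx^2 \in V^{-+}$, $\ast dx^3 \in V^{--}$. Passing to the quotient $\sH = V / \mathrm{span}\{\ast dx^1, \ast dx^2, \ast dx^3\}$ gives the claimed dimensions $(2, 3, 3, 1)$. The main obstacle lies in the careful bookkeeping of the local model at each puncture; in particular, one must confirm uniformly that each puncture is fixed (not permuted) by both involutions and that the local coordinates compatible with the symmetries exist at every $p_j$, so that the sign chart above applies at each of the three ends.
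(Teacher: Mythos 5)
Your argument is correct, and the decisive local computation at the punctures (identifying the $(\tau_1,\tau_2)$-types of $\re(dw/w)$, $\im(dw/w)$, $\re(dw/w^2)$, $\im(dw/w^2)$ and then imposing the residue constraint $\sum_j b_j = 0$) is the same as the paper's. The packaging is genuinely different: the paper fixes a parity, uses the residue map to show an upper bound such as $\dim\widetilde\sH^{++}\le 2$, and concludes equality only after summing all four bounds against $\dim\widetilde\sH = 12$; you instead split the short exact sequence $0\to H^{1,0}\to\Omega\to\mathrm{PP}\to 0$ after taking real parts, where it becomes $(\tau_1,\tau_2)$-equivariant and hence splits by Maschke, and then read off each eigenspace dimension directly. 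This buys a uniform treatment of all four parities -- in the paper's scheme the $+-$ and $-+$ cases (where all residues vanish) need the ``residue map'' replaced by the coefficient of $dw/w^2$ -- at the cost of invoking surjectivity of the principal-parts map (Mittag--Leffler on $\overline{\Sigma_t}$), which the paper uses only implicitly via Riemann--Roch when computing $\dim\widetilde\sH=12$. One bookkeeping note: you take $\tau_1\colon z\mapsto\bar z$, $\tau_2\colon z\mapsto-\bar z$ on the torus, giving $dx\in(+,-)$ and $dy\in(-,+)$, whereas the paper's table has $dx\in{-}{+}$ and $dy\in{+}{-}$, i.e., the opposite identification of which $\RR^3$-reflection descends to which torus reflection. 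This does not change the final answer, since the dimension vector $(2,4,4,2)$ for $\widetilde\sH$ is symmetric under swapping $+-$ and $-+$ and the types of $\ast dx^j$ depend only on the $\RR^3$ action and on the $\tau_i$ being orientation-reversing on $\Sigma_t$; still, it would be worth pinning down the correct identification for the specific Weierstrass parametrization to keep your conventions internally consistent with the rest of the section.
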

\begin{proof}
Write $\widetilde\sH = \sH^{1}(\Sigma_{t})\cap L^{2}_{*}(\Sigma_{t})$. Because $*dx^{1},*dx^{2},*dx^{3}$ are of type $+-,-+,--$ respectively, it suffices to show that the decomposition
\[
\widetilde\sH = \widetilde\sH^{++}\oplus \widetilde\sH^{+-} \oplus \widetilde\sH^{-+} \oplus \widetilde\sH^{--}
\]
has 
\begin{align*}
\dim \widetilde\sH^{++} & = 2\\
\dim \widetilde\sH^{+-} & = 4\\
\dim \widetilde\sH^{-+} & = 4\\
\dim \widetilde\sH^{--} & = 2.
\end{align*}
Observe that the forms $dx$ and $dy$ on the torus $\overline{\Sigma_{t}}$ are of type $-+$ and $+-$, respectively. Moreover, these span $\sH^{1}(\Sigma_{t}) \cap L^{2}(\Sigma_{t})$. 

Consider $\omega \in \widetilde\sH^{++}$ (the other cases will follow similarly). Define the associated holomorphic $1$-form $\alpha = \omega + i * \omega$. We will work near the end $\pi(0)$. We have that
\[
\alpha = \alpha_{1} \frac{dz}{z} + \alpha_{2} \frac{dz}{z^{2}} + O(1)
\]
near $z=0$. Observe that  
\begin{align*}
\frac{dz}{z} & = \frac{xdx+ydy}{x^2+y^2} + i \frac{xdy-ydx}{x^2+y^2} \\
\frac{dz}{z^2}& = \frac{(x^2-y^2)dx + 2xydy}{(x^2-y^2)^2+4x^2y^2} + i \frac{(x^2-y^2)dy-2xydx}{(x^2-y^2)^2+4x^2y^2}
\end{align*}
and that the real and imaginary components of these forms obey the following parity relations:
\renewcommand{\arraystretch}{2.2}
{\begin{center}
\begin{tabular}{ |c| c | }
\hline
{ 1-form} & { type} \\\hline
$\dfrac{xdx+ydy}{x^2+y^2} $ & $++$ \\ 
$\dfrac{xdy-ydx}{x^2+y^2}$ & $--$ \\ 
$\dfrac{(x^2-y^2)dx + 2xydy}{(x^2-y^2)^2+4x^2y^2}$ & $-+$\\ 
$\dfrac{(x^2-y^2)dy-2xydx}{(x^2-y^2)^2+4x^2y^2}$ & $+-$\\ \hline
\end{tabular}
\end{center}
}
Because $\omega = \re\alpha$ was assumed to be in $\widetilde\sH^{++}$, we thus find that $\alpha_{1} \in \RR$ and $\alpha_{2} = 0$. A similar computation holds at the other ends $\pi(1/2)$ and $\pi(it/2)$. By \cite[Proposition II.5.4]{FK}, the sum of the residues of $\omega$ equals zero. Thus, because $\widetilde\sH^{++}\cap L^{2}(\Sigma_{t})=\{0\}$, we find that $\dim \widetilde\sH^{++}\leq 2$. More specifically, to each $\omega \in \widetilde\sH^{++}$ we have an injective map
\[
\omega\mapsto (\Res_{\pi(0)}\alpha,\Res_{\pi(1/2)}\alpha,\Res_{\pi(it/2)}\alpha)
\]
whose image is contained in the two-dimensional subspace $\{(u,v,w) : u+v+w=0\}\subset \RR^{3}$. This yields the asserted inequality. Arguing similarly, for the other parities, we find that the assertion in the lemma holds in the weaker form where inequalities replace the equalities.  However, since $\dim\widetilde\sH = 12$, this implies the full assertion.
 \end{proof}

Next, we also decompose the space of smooth functions as on $\Sigma_t$ as:
\begin{equation}\label{eq:decom}
C^{\infty}(\Sigma_t) = C^{++} \oplus C^{+-} \oplus C^{-+} \oplus C^{--}.
\end{equation}
Let $W$ be the span of eigenfunctions in $L^2_\ast$ with negative eigenvalue, corresponding to the Morse index of $\Sigma_t$. Recall the we are assuming $W$ has dimension 3. We decompose $W$ according to \eqref{eq:decom} as
\[
W = W^{++} \oplus W^{+-} \oplus W^{-+} \oplus W^{--}
\]
and write $\dim W^{\pm\pm} = w^{\pm\pm}$. We will show:

\begin{lemm} \label{lemm:symmetries-inequalities-CHM} The following inequalities are true:
\begin{align*}
2 = \dim \sH^{++} & \leq w^{-+} + w^{+-} + w^{++}\\
3 = \dim \sH^{+-} & \leq w^{--} + w^{++} + w^{+-}\\
3 = \dim \sH^{-+} & \leq w^{++} + w^{--} + w^{-+}\\
1 = \dim \sH^{--} & \leq w^{+-} + w^{-+} + w^{--}.\end{align*}
\end{lemm}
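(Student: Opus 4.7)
The plan is to run the injectivity argument of Section \ref{sec:proof-main} equivariantly with respect to the $\ZZ_{2}\times\ZZ_{2}$-action generated by $\tau_{1},\tau_{2}$.

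First I would introduce the linear map
\[
\Phi:\sH \longrightarrow W\oplus W\oplus W,\qquad \omega\longmapsto \bigl(P_{W}\bangle{\omega,dx^{1}},\,P_{W}\bangle{\omega,dx^{2}},\,P_{W}\bangle{\omega,dx^{3}}\bigr),
\]
where $P_{W}$ denotes $L^{2}_{\ast}$-orthogonal projection onto $W$. The argument carried out in Section \ref{sec:proof-main} shows that $\Phi$ is injective: if $\Phi(\omega)=0$ then each component of $X_{\omega}$ is $L^{2}_{\ast}$-orthogonal to $W$, and that argument then forces $\omega\in\Span\{\mbox{$*$}dx^{1},\mbox{$*$}dx^{2},\mbox{$*$}dx^{3}\}$; by the very definition of $\sH$ this intersection is trivial, so $\omega=0$.

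Next I would check that the involutions $\tau_{1},\tau_{2}$ pass cleanly through all the relevant constructions. Since they are isometries of $\Sigma_{t}$ preserving $|x|$, they act unitarily on $L^{2}_{\ast}(\Sigma_{t})$, commute with $L=-\Delta+2\kappa$, preserve $W$, and therefore commute with $P_{W}$. In particular $W$ decomposes compatibly with the $(\tau_{1},\tau_{2})$-parities as $W=W^{++}\oplus W^{+-}\oplus W^{-+}\oplus W^{--}$, and $P_{W}$ sends any function of parity $(a,b)$ into $W^{ab}$.

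The final step is a direct bookkeeping of parities. Since $\tau_{1}$ negates only $x_{1}$ and $\tau_{2}$ negates only $x_{2}$, the forms $dx^{1},dx^{2},dx^{3}$ have parities $(-,+)$, $(+,-)$, $(+,+)$ respectively. For $\omega\in\sH^{\varepsilon_{1}\varepsilon_{2}}$, the function $\bangle{\omega,dx^{i}}$ has parity equal to the componentwise product of the parities of $\omega$ and $dx^{i}$, so each coordinate of $\Phi(\omega)$ lies in an explicit sector of $W$. For instance, taking $(\varepsilon_{1},\varepsilon_{2})=(+,+)$ sends the three coordinates of $\Phi(\omega)$ into $W^{-+}$, $W^{+-}$, $W^{++}$, respectively, so injectivity of $\Phi$ yields $\dim\sH^{++}\leq w^{-+}+w^{+-}+w^{++}$. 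The other three inequalities follow by the analogous computation in each symmetry sector. The only substantive ingredient is the injectivity of $\Phi$, which is inherited from Section \ref{sec:proof-main}; the remaining step is routine symmetry bookkeeping, and I do not anticipate any real obstacle beyond it.
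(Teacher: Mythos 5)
Your proof is correct and follows essentially the same route as the paper: the key facts are the parity of $\bangle{\omega,dx^{i}}$ in each symmetry sector and the result from Section \ref{sec:proof-main} that $X_{\omega}\perp W$ forces $\omega\in\Span\{\mbox{$*$}dx^{1},\mbox{$*$}dx^{2},\mbox{$*$}dx^{3}\}$. Packaging the argument as injectivity of the projection map $\Phi$ rather than as a constraint count is a cosmetic rephrasing of what the paper does.
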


\begin{proof}
We will prove the first inequality and the rest will follow analogously.
Suppose $w^{-+} + w^{+-} + w^{++} \leq 1$. For $\omega \in \sH^{++}$, note that 
\[
\bangle{\omega,dx^{1}} \in C^{-+}, \bangle{\omega,dx^{2}} \in C^{+-},\bangle{\omega,dx^{3}} \in C^{++}.
\]
Thus, the requirement that $\bangle{\omega,dx^{1}}$ is orthogonal to $W$ with respect to $L^{2}_{\ast}$ amounts to at most $w^{-+}$ linear equations (pairing an element of $C^{-+}$ with e.g., $C^{++}$ vanishes automatically, by a parity argument). Thus, requiring that $X_{\omega}$ is orthogonal to $W$ takes at most $w^{-+}+w^{+-}+w^{++}$ equations. 

By our assumption, we can thus find such a $\omega \in \sH^{++}\setminus\{0\}$ that is orthogonal to $W$ in $L^2_\ast$. Repeating the proof of Theorem \ref{theo:main}, we  conclude that $\omega\in\Span\{*dx^1,*dx^2,*dx^3\}$, which is a contradiction since $\sH$ is orthogonal to $\Span\{*dx^1,*dx^2,*dx^3\}$.
\end{proof}

If we assume that $\Index(\Sigma_t) = 3$, then $\sum w^{\pm\pm} = 3$. Combined with Lemma \ref{lemm:symmetries-inequalities-CHM}, we find
\begin{equation}\label{eq:w-pmpm-bds-ind3}
w^{+-} = w^{-+} = 0,\qquad w^{++} = 2, \qquad w^{--} = 1. 
\end{equation}

We show below that this is a contradiction by comparing with Choe's vision number.\footnote{Alternatively, one may find a contradiction by first showing that $w^{--}\leq w^{+-}\leq w^{++}$ by using the variational characterization of $\omega^{\pm\pm}$ in a quarter of $\Sigma_{t}$.}

\subsection{Comparing with the vision number} 
Write $\Pi_{i} = \{\tau_{i}(x) = x\}$ for the $\{x_{i}=0\}$ planes of reflection symmetry in $\RR^{3}$. Consider $\phi_{\ell}$ the Jacobi field coming from the rotation around the $x_{3}$-axis. Write $H(\phi_{\ell}) = \{\phi_{\ell}= 0 \}$. By the symmetries, we have that
\[
\Pi_{1}\cup\Pi_{2} \subset H(\phi_{\ell}),
\]
so the vision number $v(\phi_{\ell})$ (i.e., the number of connected components of $\Sigma_{t}\setminus H(\phi_{\ell})$, see Choe \cite{Choe:vision}) satisfies $v(\phi_{\ell}) \geq 4$. See \cite[p.\ 50]{HoffmanKarcher} for a picture of one quadrant of the Costa surface. 

On the other hand, since we have assumed that $\Index(\Sigma_{t}) = 3$, we have by Choe \cite{Choe:vision}:
\[
3 = \Index(\Sigma_{t}) \geq v(\phi_{\ell}) - 1 \geq 3,
\]
we find that\footnote{In fact, one can prove this holds, even without assuming that $\Index(\Sigma_{t}) = 3$. See \cite[Remark 7.3]{HoffmanKarcher}.} $v(\phi_{\ell}) = 4$. Write $\Sigma'$ for one of the components of $\Sigma_{t}\setminus (H_{1}\cup H_{2})$. Because $v(\phi_{\ell}) = 4$, we must have that $\phi_{\ell}$ does not change sign on the interior of each connected componnet of $\Sigma'$ and vanishes at their boundary. Thus, $\Sigma'$ is a stable minimal surface with fixed boundary. By the usual argument relating odd symmetry and Dirichlet boundary conditions, this implies that $w^{--} = 0$, contradicting the previous section. 

Thus, we find that $\Index{(\Sigma_t)}\geq4$ for every $t$, completing the proof.

\bibliography{bib} 

\providecommand{\bysame}{\leavevmode\hbox to3em{\hrulefill}\thinspace}
\providecommand{\MR}{\relax\ifhmode\unskip\space\fi MR }
\providecommand{\MRhref}[2]{%
  \href{http://www.ams.org/mathscinet-getitem?mr=#1}{#2}
}
\providecommand{\href}[2]{#2}
\begin{thebibliography}{ABCS19}

\bibitem[ABCS18]{ABCS:closed-compactness}
Lucas Ambrozio, Reto Buzano, Alessandro Carlotto, and Ben Sharp,
  \emph{Geometric convergence results for closed minimal surfaces via bubbling
  analysis}, \url{https://arxiv.org/abs/1803.04956} (2018).

\bibitem[ABCS19]{ABCS:fb}
Lucas Ambrozio, Reto Buzano, Alessandro Carlotto, and Ben Sharp, \emph{Bubbling
  analysis and geometric convergence results for free boundary minimal
  surfaces}, J. \'{E}c. polytech. Math. \textbf{6} (2019), 621--664.
  \MR{4014631}

\bibitem[BS18]{buzanoSharp}
Reto Buzano and Ben Sharp, \emph{Qualitative and quantitative estimates for
  minimal hypersurfaces with bounded index and area}, Trans. Amer. Math. Soc.
  \textbf{370} (2018), no.~6, 4373--4399. \MR{3811532}

\bibitem[Cho90]{Choe:vision}
Jaigyoung Choe, \emph{Index, vision number and stability of complete minimal
  surfaces}, Arch. Rational Mech. Anal. \textbf{109} (1990), no.~3, 195--212.
  \MR{1025170 (91b:53007)}

\bibitem[CKM17]{CKM}
Otis Chodosh, Daniel Ketover, and Davi Maximo, \emph{Minimal hypersurfaces with
  bounded index}, Invent. Math. \textbf{209} (2017), no.~3, 617--664.
  \MR{3681392}

\bibitem[CM16]{CM16}
Otis Chodosh and Davi Maximo, \emph{On the topology and index of minimal
  surfaces}, Journal of Differential Geometry \textbf{104} (2016), no.~3,
  399--418.

\bibitem[Cos89]{Costa:JDG}
C.~J. Costa, \emph{Uniqueness of minimal surfaces embedded in {${\bf R}^3,$}
  with total curvature {$12\pi$}}, J. Differential Geom. \textbf{30} (1989),
  no.~3, 597--618. \MR{1021368 (90k:53011)}

\bibitem[Cos91]{Costa:Invent}
\bysame, \emph{Classification of complete minimal surfaces in {${\bf R}^3$}
  with total curvature {$12\pi$}}, Invent. Math. \textbf{105} (1991), no.~2,
  273--303. \MR{1115544 (92h:53010)}

\bibitem[EM08]{EjiriMicallef}
Norio Ejiri and Mario Micallef, \emph{Comparison between second variation of
  area and second variation of energy of a minimal surface}, Adv. Calc. Var.
  \textbf{1} (2008), no.~3, 223--239. \MR{2458236 (2009j:58019)}

\bibitem[FC85]{Fischer-Colbrie:1985}
D.~Fischer-Colbrie, \emph{On complete minimal surfaces with finite {M}orse
  index in three-manifolds}, Invent. Math. \textbf{82} (1985), no.~1, 121--132.
  \MR{808112 (87b:53090)}

\bibitem[FK80]{FK}
Hershel~M. Farkas and Irwin Kra, \emph{Riemann surfaces}, Graduate Texts in
  Mathematics, vol.~71, Springer-Verlag, New York-Berlin, 1980. \MR{583745
  (82c:30067)}

\bibitem[GL86]{Gulliver-Lawson}
Robert Gulliver and H.~Blaine Lawson, Jr., \emph{The structure of stable
  minimal hypersurfaces near a singularity}, Geometric measure theory and the
  calculus of variations ({A}rcata, {C}alif., 1984), Proc. Sympos. Pure Math.,
  vol.~44, Amer. Math. Soc., Providence, RI, 1986, pp.~213--237. \MR{840275
  (87g:53091)}

\bibitem[GNY04]{GNY:eig}
Alexander Grigor'yan, Yuri Netrusov, and Shing-Tung Yau, \emph{Eigenvalues of
  elliptic operators and geometric applications}, Surveys in differential
  geometry. {V}ol. {IX}, Surv. Differ. Geom., IX, Int. Press, Somerville, MA,
  2004, pp.~147--217. \MR{2195408 (2007f:58039)}

\bibitem[Gul86]{Gulliver:indexFTC}
Robert Gulliver, \emph{Index and total curvature of complete minimal surfaces},
  Geometric measure theory and the calculus of variations ({A}rcata, {C}alif.,
  1984), Proc. Sympos. Pure Math., vol.~44, Amer. Math. Soc., Providence, RI,
  1986, pp.~207--211. \MR{840274 (87f:53005)}

\bibitem[GY03]{GriYau03}
Alexander Grigor'yan and Shing-Tung Yau, \emph{Isoperimetric properties of
  higher eigenvalues of elliptic operators}, Amer. J. Math. \textbf{125}
  (2003), no.~4, 893--940. \MR{1993744}

\bibitem[HK97]{HoffmanKarcher}
David Hoffman and Hermann Karcher, \emph{Complete embedded minimal surfaces of
  finite total curvature}, Geometry, {V}, Encyclopaedia Math. Sci., vol.~90,
  Springer, Berlin, 1997, pp.~5--93. \MR{1490038 (98m:53012)}

\bibitem[HM90]{HoffmanMeeks:halfspace}
D.~Hoffman and W.~H. Meeks, III, \emph{The strong halfspace theorem for minimal
  surfaces}, Invent. Math. \textbf{101} (1990), no.~2, 373--377. \MR{1062966
  (92e:53010)}

\bibitem[JM83]{JorgeMeeks}
Luqu{{\'e}}sio~P. Jorge and William~H. Meeks, III, \emph{The topology of
  complete minimal surfaces of finite total {G}aussian curvature}, Topology
  \textbf{22} (1983), no.~2, 203--221. \MR{683761 (84d:53006)}

\bibitem[L{\'o}p92]{Lopez:12pi}
Francisco~J. L{\'o}pez, \emph{The classification of complete minimal surfaces
  with total curvature greater than {$-12\pi$}}, Trans. Amer. Math. Soc.
  \textbf{334} (1992), no.~1, 49--74. \MR{1058433 (93a:53008)}

\bibitem[LR89]{LopezRos:index-one}
Francisco~J. L{{\'o}}pez and Antonio Ros, \emph{Complete minimal surfaces with
  index one and stable constant mean curvature surfaces}, Comment. Math. Helv.
  \textbf{64} (1989), no.~1, 34--43. \MR{982560 (90b:53006)}

\bibitem[LR91]{LopezRos:genus-zero}
\bysame, \emph{On embedded complete minimal surfaces of genus zero}, J.
  Differential Geom. \textbf{33} (1991), no.~1, 293--300. \MR{1085145
  (91k:53019)}

\bibitem[Mor09]{Morabito}
Filippo Morabito, \emph{Index and nullity of the {G}auss map of the
  {C}osta-{H}offman-{M}eeks surfaces}, Indiana Univ. Math. J. \textbf{58}
  (2009), no.~2, 677--707. \MR{2514384 (2010j:53017)}

\bibitem[Mor17]{Morabito:comm}
\bysame, private communication, September 2017.

\bibitem[MR91]{MontielRos}
Sebasti{{\'a}}n Montiel and Antonio Ros, \emph{Schr{\"o}dinger operators
  associated to a holomorphic map}, Global differential geometry and global
  analysis ({B}erlin, 1990), Lecture Notes in Math., vol. 1481, Springer,
  Berlin, 1991, pp.~147--174. \MR{1178529 (93k:58053)}

\bibitem[Nay90]{Nayatani}
Shin Nayatani, \emph{Lower bounds for the {M}orse index of complete minimal
  surfaces in {E}uclidean {$3$}-space}, Osaka J. Math. \textbf{27} (1990),
  no.~2, 453--464. \MR{1066638 (91g:58051)}

\bibitem[Nay92]{Nayatani:CHM-index}
\bysame, \emph{Morse index of complete minimal surfaces}, The problem of
  {P}lateau, World Sci. Publ., River Edge, NJ, 1992, pp.~181--189. \MR{1209216
  (94e:58027)}

\bibitem[Nay93]{Nayatani:indexGauss}
\bysame, \emph{Morse index and {G}auss maps of complete minimal surfaces in
  {E}uclidean {$3$}-space}, Comment. Math. Helv. \textbf{68} (1993), no.~4,
  511--537. \MR{1241471}

\bibitem[Nay17]{Nayatani:comm}
\bysame, private communication, October 2017.

\bibitem[Oss64]{Osserman:FTC}
Robert Osserman, \emph{Global properties of minimal surfaces in {$E^{3}$} and
  {$E^{n}$}}, Ann. of Math. (2) \textbf{80} (1964), 340--364. \MR{0179701 (31
  \#3946)}

\bibitem[Ros92]{Ross}
Marty Ross, \emph{Complete nonorientable minimal surfaces in {$\bold R^3$}},
  Comment. Math. Helv. \textbf{67} (1992), no.~1, 64--76. \MR{1144614
  (92k:53022)}

\bibitem[Ros06]{Ros:one-sided}
Antonio Ros, \emph{One-sided complete stable minimal surfaces}, J. Differential
  Geom. \textbf{74} (2006), no.~1, 69--92. \MR{2260928 (2007g:53008)}

\bibitem[Sch83]{Schoen:symmetry}
Richard~M. Schoen, \emph{Uniqueness, symmetry, and embeddedness of minimal
  surfaces}, J. Differential Geom. \textbf{18} (1983), no.~4, 791--809 (1984).
  \MR{730928 (85f:53011)}

\bibitem[Tys87]{Tysk}
Johan Tysk, \emph{Eigenvalue estimates with applications to minimal surfaces},
  Pacific J. Math. \textbf{128} (1987), no.~2, 361--366. \MR{888524
  (88i:53102)}

\end{thebibliography}
\bibliographystyle{amsalpha}
\end{document}